\newtheorem{theorem}{Theorem}
\theoremstyle{plain}
\newtheorem{corollary}{Corollary}
\newtheorem{lemma}{Lemma}
\newtheorem{proposition}{Proposition}
\numberwithin{equation}{section}
\def\ox{\otimes}
\def\M#1{\mathcal M({#1})}
\begin{document}

\title{Firm Frobenius monads and firm Frobenius algebras}

\author{Gabriella B\"ohm}
\address{Wigner Research Centre for Physics, Budapest,
H-1525 Budapest 114, P.O.B.\ 49, Hungary}
\email{\tt bohm.gabriella@wigner.mta.hu}
\author{Jos\'e G\'omez-Torrecillas}
\address{Departamento de \'Algebra,
Universidad de Granada,
E-18071 Granada, Spain}
\email{\tt gomezj@ugr.es}
\date{Apr 2013}

\markboth{Author(s)}{Firm Frobenius monads}

\maketitle 

\begin{center}
{\em Dedicated to\\
Toma Albu and Constantin N\u ast\u asescu on the occasion of their 70th
birthdays} 
\end{center}
\bigskip

\begin{abstract}
{\em Firm Frobenius algebras} are firm algebras and counital coalgebras such
that the comultiplication is a bimodule map. They are investigated by
categorical methods based on a study of adjunctions and lifted functors. Their
categories of comodules and of firm modules are shown to be isomorphic if and
only if a canonical comparison functor from the category of comodules to the
category of non-unital modules factorizes through the category of firm
modules. This happens for example if the underlying algebra possesses local
units, e.g. the firm Frobenius algebra arises from a co-Frobenius coalgebra
over a base field; or if the comultiplication splits the multiplication (hence
the underlying coalgebra is coseparable). 
\end{abstract}

\begin{quotation}
\noindent{\bf Key Words}: {firm Frobenius monad, firm Frobenius adjunction,
firm module, comodule, separable functor} 

\noindent{\bf 2010 Mathematics Subject Classification}: { Primary 16L60, 
Se\-con\-dary 16D90, 18C15, 18C20, 18A40, 16T15}
\end{quotation}


\thispagestyle{empty} 

\section{Introduction}
The classical notion of {\em Frobenius algebra} due to Brauer and Nesbitt
\cite{BraNes} can be reformulated in terms of the existence of a suitable
coalgebra structure on the algebra \cite{LAbrams}. Thus, a Frobenius algebra
over a commutative ring $k$ is a $k$-module carrying the structures of an
associative and unital algebra and a coassociative and counital
coalgebra. These structures are required to be compatible in the sense that
the comultiplication is a bimodule map (with respect to the actions provided 
by the multiplication). Equivalently, the multiplication is a bicomodule map
(with respect to the coactions provided by the comultiplication). As discussed
by Abrams in \cite{LAbrams}, this compatibility condition results in an
isomorphism between the category of modules and the category of comodules over
a Frobenius algebra. 

In \cite{Street:Frob}, Frobenius algebras were treated by Street in the
broader framework of monoidal (bi)categories. The behavior of the module and
comodule categories was given a deep conceptual explanation and a number of
equivalent characterizations of Frobenius monoids was given. Applying it to
the monoidal category of functors $\mathbb A\to \mathbb A$ for an arbitrary
category $\mathbb A$, the notion of {\em Frobenius monad} is obtained. By
\cite[Theorem 1.6]{Street:Frob}, a Frobenius monad is a monad $M:\mathbb A\to
\mathbb A$, with multiplication $\mu:M^2\to M$ and unit $\eta: \mathbb A\to M$
such that any of the following equivalent assertions holds. 
\begin{itemize}
\item[{(i)}] 
There exist natural transformations $\varepsilon:M\to \mathbb A$ and
$\varrho:\mathbb A \to M ^2$ such that $M\varepsilon\cdot M\mu\cdot \varrho
M=M=\varepsilon M\cdot \mu M \cdot M\varrho$. 
\item[{(ii)}] 
There exists a comonad structure $\delta:M\to M^2$, $\varepsilon:M\to \mathbb
A$ such that $\mu M\cdot M \delta =\delta.\mu=M\mu\cdot \delta M$. 
\item[{(iii)}] 
The forgetful functor from the category of $M$-modules to $\mathbb A$
possesses a right adjoint $A\stackrel \phi \to A'\mapsto (MA,\mu A) \stackrel
{M\phi} \longrightarrow (MA',\mu A')$. 
\end{itemize}
It is immediate from (ii) that an endofunctor $(-)\ox R$ on the category of
modules over a commutative ring $k$ is a Frobenius monad if and only if $R$ is
a Frobenius $k$-algebra. Characterization (i) yields a description of
Frobenius algebras in terms of a {\em functional} $\varepsilon_k:R\to k$ and a
{\em Casimir element} $\varrho_k(1)\in R\ox R$. 

The above classical notion of Frobenius algebra is essentially self-dual: the
algebra and coalgebra structures play symmetric roles. So if allowing the
algebra to be non-unital, it is not immediately clear what properties remain
true. 

Our approach to non-unital Frobenius algebras in this paper is based on
Street's categorical treatment in \cite{Street:Frob}. Generalizing non-unital
algebras, we start with discussing {\em non-unital monads}; that is
endofunctors equipped with an associative multiplication possibly without a
unit. While there is an evident notion of their {\em non-unital modules}, our 
definition of a {\em firm module} is slightly more sophisticated. It leads to
the notion of a {\em firm monad} which is a non-unital monad whose free
modules are firm. 

A {\em non-unital Frobenius monad} is defined as a (coassociative and
counital) co\-monad equipped also with an associative but not necessarily
unital multiplication satisfying the compatibility conditions in (ii) above. 
Associated to it, there is the usual Eilenberg-Moore category of
(coassociative and counital) comodules over the constituent comonad and the
categories of non-unital, and of firm modules over the constituent non-unital
monad. Generalizing the equivalence (ii)$\Leftrightarrow$(iii) above, we
investigate in what sense the corresponding forgetful functors possess
adjoints. 

This analysis leads in particular to a canonical comparison functor from the
category of comodules to the category of non-unital modules. It is proven to
factorize through the category of firm modules if and only if the underlying
monad is firm and the category of firm modules and the category of comodules
are isomorphic. We collect situations when this happens. In particular,
we show that any adjunction in which the left adjoint is separable, 
induces a firm Frobenius monad. For firm Frobenius monads of this kind, the
category of firm modules and the category of comodules are shown to be
isomorphic. 

We apply our results to algebras over commutative rings. This yields a
generalization of Abrams' theorem \cite{LAbrams} to {\em firm} Frobenius
algebras. In particular, the isomorphism between the category of firm modules
and the category of comodules follows from our theory in the following
situations. 
\begin{itemize}
\item For firm Frobenius algebras arising from coseparable coalgebras (and
even from coseparable corings) over any base ring. This provides an
alternative proof of \cite[Proposition 2.17]{BoVe}. 
\item For firm Frobenius algebras with local units.
\item In particular, for firm Frobenius algebras arising from co-Frobenius
coalgebras over a field. This provides an alternative proof of \cite[Theorem
2.3]{CaDaNa} and \cite[Proposition 2.7]{Beattie/alt:1998}. 
\end{itemize}
A firm algebra $R$ with a {\em non-degenerate} multiplication is shown to be a
firm Frobenius algebra if and only if there exists a generalized Casimir
element in the multiplier algebra of $R\ox R$ (cf. conditions (i) above). 

\section{Non-unital monads and firm modules}

\subsection{Non-unital monad}
By a {\em non-unital monad} on a category $\mathbb A$ we mean a pair of a
functor $M:\mathbb A \to \mathbb A$ and a natural transformation $\mu:M^2\to
M$ obeying the associativity condition 
$$
\xymatrix@C=60pt@R=15pt{
M^3\ar[r]^-{M\mu}\ar[d]_-{\mu M}&
M^2\ar[d]^-\mu\\
M^2\ar[r]^-\mu&
M.}
$$

\subsection{Non-unital module}\label{sec:nu_module}
By a {\em non-unital module} over a non-unital monad $M:\mathbb A \to \mathbb
A$ we mean a pair of an object $A$ and a morphism $\alpha:MA\to A$ (called the
$M$-{\em action}) in $\mathbb A$ obeying the associativity condition 
\begin{equation}\label{ass}
\xymatrix@C=60pt@R=15pt{
M^2A\ar[r]^-{M\alpha}\ar[d]_-{\mu A}&
MA\ar[d]^-\alpha\\
MA\ar[r]^-\alpha&
A.}
\end{equation}
Morphisms of non-unital $M$-modules are morphisms $A\to A'$ in $\mathbb A$
which are compatible with the $M$-actions in the evident sense. These objects
and morphisms define the category $\mathbb A_M$ of non-unital $M$-modules. 

In terms of the forgetful functor $U_M:\mathbb A_M\to \mathbb A$ and the
functor 
$$
F_M:\mathbb A \to \mathbb A_M,\qquad
A\stackrel{\phi}\to A' \mapsto 
(MA,\mu A) \stackrel{M\phi}\longrightarrow (MA',\mu A'),
$$
we can write $M=U_M F_M$. Although there is a natural transformation 
$$
\alpha: F_M U_M (A,\alpha)\to (A,\alpha),
$$
in the absence of a unit this does not provide an adjunction.

\subsection{Firm module}
We say that a non-unital module $(A,\alpha)$ over a non-unital monad
$M:\mathbb A \to \mathbb A$ is {\em firm} if $\alpha$ is an epimorphism in
$\mathbb A$ and the fork 
\begin{equation}\label{eq:firm_module}
\xymatrix@C=40pt{
M^2A \ar@<2pt>[r]^-{\mu A} \ar@<-2pt>[r]_-{M\alpha}&
MA\ar[r]^-\alpha&
A}
\end{equation}
is a coequalizer in $\mathbb A_M$. The full subcategory of firm modules in
$\mathbb A_M$ will be denoted by $\mathbb A_{(M)}$, and we will denote by $J$
the full embedding $\mathbb A_{(M)}\to \mathbb A_M$. When $M$ is a usual (unital)
monad, then the category of firm $M$-modules is just the usual Eilenberg-Moore
category of unital $M$-modules.

If the functor underlying a non-unital monad is known to preserve
epimorphisms, then a simpler criterion for firmness can be given. 

\begin{lemma}\label{lem:Malpha}
Let $M:\mathbb A\to \mathbb A$ be a non-unital monad and $(A,\alpha)$ be a
non-unital $M$-module. If \eqref{eq:firm_module} is a coequalizer in $\mathbb
A$ and $M\alpha$ is an epimorphism in $\mathbb A$, then $(A,\alpha)$ is a firm
$M$-module.  
\end{lemma}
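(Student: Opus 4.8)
The plan is to verify the two requirements in the definition of a firm module separately, observing that the first is automatic and that the entire content lies in the second.

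First I would note that since \eqref{eq:firm_module} is assumed to be a coequalizer in $\mathbb A$, the arrow $\alpha$ is in particular a coequalizer map, hence an epimorphism in $\mathbb A$; this settles one of the two firmness conditions without using the hypothesis on $M\alpha$. I would also record that the fork \eqref{eq:firm_module} genuinely lives in $\mathbb A_M$: writing the objects as the free modules $(M^2A,\mu(MA))$, $(MA,\mu A)$ and the given module $(A,\alpha)$, one checks that $\mu A$ and $M\alpha$ are morphisms of non-unital modules by the associativity and the naturality of $\mu$ respectively, while $\alpha$ is a module morphism precisely by the associativity square \eqref{ass}. These checks are formal and independent of the standing hypotheses.

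The substance is to promote the coequalizer from $\mathbb A$ to $\mathbb A_M$. To this end I would take an arbitrary non-unital module $(B,\beta)$ together with a module morphism $g:(MA,\mu A)\to(B,\beta)$ satisfying $g\cdot\mu A=g\cdot M\alpha$, and produce the required mediating morphism. Applying $U_M$, the underlying map $g$ coequalizes the pair $\mu A, M\alpha$ in $\mathbb A$, so the coequalizer property of $\alpha$ in $\mathbb A$ yields a unique morphism $h:A\to B$ in $\mathbb A$ with $h\cdot\alpha=g$. Uniqueness of $h$ as a morphism in $\mathbb A_M$ is then immediate, since $\alpha$ is an epimorphism and any module morphism with the prescribed property must already agree with $h$ after forgetting to $\mathbb A$.

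The hard part, and the only place where the hypothesis that $M\alpha$ be an epimorphism is used, is to check that this $h$ is in fact a morphism of non-unital modules, i.e.\ that $\beta\cdot Mh=h\cdot\alpha$. I would obtain this by a precompose-and-cancel argument. Using functoriality of $M$, the relation $h\cdot\alpha=g$, the fact that $g$ is a module morphism, and finally that $g$ coequalizes the fork, one computes
\[
\beta\cdot Mh\cdot M\alpha
=\beta\cdot M(h\cdot\alpha)
=\beta\cdot Mg
=g\cdot\mu A
=g\cdot M\alpha .
\]
Thus $(\beta\cdot Mh)\cdot M\alpha=g\cdot M\alpha$, and since $M\alpha$ is an epimorphism this forces $\beta\cdot Mh=g=h\cdot\alpha$, which is exactly the module-morphism condition for $h$. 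This completes the verification that \eqref{eq:firm_module} is a coequalizer in $\mathbb A_M$, and hence that $(A,\alpha)$ is firm. I expect the epimorphism hypothesis on $M\alpha$ to be the essential ingredient: without it the mediating map produced in $\mathbb A$ need not respect the actions, so the coequalizer would fail to lift from $\mathbb A$ to $\mathbb A_M$.
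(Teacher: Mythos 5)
Your proof is correct and takes essentially the same approach as the paper: construct the mediating morphism via the coequalizer in $\mathbb A$, then show it respects the actions by precomposing the desired identity $\beta\cdot Mh=h\cdot\alpha$ with the epimorphism $M\alpha$ and cancelling, which is exactly the paper's diagram chase (the paper cancels after invoking associativity of $\alpha$ where you invoke $g\cdot\mu A=g\cdot M\alpha$, an immaterial variation). Your explicit verification that the fork lives in $\mathbb A_M$ is left implicit in the paper but is harmless and correct.
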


\begin{proof}
By assumption, $\alpha$ is a (regular) epimorphism in $\mathbb A$. Consider a
morphism $\kappa$ of non-unital $M$-modules from $(MA, \mu A)$ to any
non-unital $M$-module $(X,\xi)$ such that $\kappa\cdot \mu A= \kappa\cdot
M\alpha$. Since \eqref{eq:firm_module} is a coequalizer in $\mathbb A$, there
is a unique morphism $\tilde \kappa:A \to X$ in $\mathbb A$ satisfying
$\tilde\kappa\cdot \alpha=\kappa$. The subdiagrams of 
$$
\xymatrix@R=15pt@C=40pt{
MA\ar[dd]_-{M\tilde\kappa}\ar[rrr]^-\alpha&&&
A\ar[dd]^-{\tilde\kappa}\\
&M^2A\ar[lu]^-{M\alpha}\ar[ld]_-{M\kappa}\ar[r]^-{\mu A}&
MA\ar[ru]_-{\alpha}\ar[rd]^-\kappa\\
MX\ar[rrr]^-\xi&&&
X}
$$
commute since $\kappa$ is a morphism of non-unital $M$-modules and $\alpha$ is
associative. So by the assumption that $M\alpha$ is an epimorphism in $\mathbb
A$, also the exterior commutes proving that $\tilde \kappa$ is a morphism in
$\mathbb A_M$ hence \eqref{eq:firm_module} is a coequalizer in $\mathbb A_M$. 
\end{proof}

\subsection{Firm monad} \label{sec:firm_monad}
We say that a non-unital monad $M$ on a category $\mathbb A$ is a {\em firm
 monad} if the functor $F_M:\mathbb A\to \mathbb A_M$ in Section
\ref{sec:nu_module} factorizes through some functor $F_{(M)}:\mathbb A\to
\mathbb A_{(M)}$ via the inclusion $J:\mathbb A_{(M)}\to \mathbb A_M$. That
is, for any object $A$ of $\mathbb A$, $\mu A$ is an epimorphism in $\mathbb
A$ and 
\begin{equation}\label{eq:firm_monad}
\xymatrix@C=40pt{
M^3 A\ar@<2pt>[r]^-{\mu MA} \ar@<-2pt>[r]_-{M\mu A}&
M^2A\ar[r]^-{\mu A}&
MA}
\end{equation}
is a coequalizer in $\mathbb A_M$. Then in terms of the forgetful functor
$U_{(M)}:\mathbb A_{(M)}\to \mathbb A$, the equality $M=U_{(M)} F_{(M)}$ holds
and there is a natural transformation 
$$
\alpha: F_{(M)} U_{(M)} (A,\alpha)=(MA,\mu A)\to (A,\alpha).
$$
However, in general this does not extend to an adjunction. 

\section{Non-unital monads versus adjunctions}

\subsection{Non-unital adjunction}\label{sec:nu_adj}
By a {\em non-unital adjunction} we mean a pair of functors $U:\mathbb B \to
\mathbb A$ and $F:\mathbb A \to \mathbb B$ together with a natural
transformation $\varphi:FU\to \mathbb B$. 

Associated to any non-unital adjunction $\varphi:FU\to \mathbb B$, there is a
non-unital monad $(UF,U\varphi F)$. Conversely, associated to any non-unital
monad $M:\mathbb A \to \mathbb A$, there is a non-unital adjunction $F_MU_M\to
\mathbb A_M$ as in Section \ref{sec:nu_module}. 

For any non-unital adjunction $\varphi:FU\to \mathbb B$, there is an induced
functor 
$$
{L}_{UF}: \mathbb B \to \mathbb A_{UF},\qquad 
B\stackrel\phi\to B'\mapsto
(UB,U\varphi B)\stackrel{U\phi}\longrightarrow (UB',U\varphi B').
$$

\subsection{Firm adjunction}\label{sec:firm_adj}
We say that a non-unital adjunction $\varphi:FU\to \mathbb B$ is {\em firm} if
the functor ${L}_{UF}: \mathbb B \to \mathbb A_{UF}$ in Section
\ref{sec:nu_adj} factorizes through some functor ${L}_{(UF)}: \mathbb B
\to \mathbb A_{(UF)}$ via the inclusion $J:\mathbb A_{(UF)}\to \mathbb
A_{UF}$. That is, for any object $B$ in $\mathbb B$, $U\varphi B$ is an
epimorphism in $\mathbb A$ and 
\begin{equation}\label{eq:firmadj}
\xymatrix@C=60pt{
UFUFUB \ar@<2pt>[r]^-{U\varphi F UB} \ar@<-2pt>[r]_-{UFU\varphi B}&
UFUB\ar[r]^-{U\varphi B}&
UB}
\end{equation}
is a coequalizer in $\mathbb A_{UF}$. 

For an adjunction $F\dashv U$ in the usual (unital and counital) sense,
\eqref{eq:firmadj} is a split coequalizer in $\mathbb A$ (in the sense of
\cite[page 93]{TTT}). Hence $F\dashv U$ is a firm adjunction by Lemma
\ref{lem:Malpha}. 

Associated to any firm adjunction $\varphi:FU\to \mathbb B$, there is a firm
monad $(UF,U\varphi F)$. Conversely, associated to any firm monad $M:\mathbb A
\to \mathbb A$, there is a firm adjunction $F_{(M)}U_{(M)}\to \mathbb A_{(M)}$
as in Section \ref{sec:firm_monad}. 

\section{Frobenius structures}

\subsection{Non-unital Frobenius monad}
By a {\em non-unital Frobenius monad} we mean a functor $M:\mathbb A \to
\mathbb A$ which carries a non-unital monad structure $\mu:M^2\to M$ and a
comonad structure $(\delta:M \to M^2,\varepsilon:M\to \mathbb A)$ such that
the following diagram commutes. 
\begin{equation}\label{Frob}
\xymatrix@R=10pt{
M^2\ar[rr]^-{M\delta}\ar[dd]_-{\delta M}\ar[rd]^-{\mu}&&
M^3\ar[dd]^-{\mu M}\\
& M \ar[rd]^-\delta\\
M^3\ar[rr]^-{M\mu}&&
M^2}
\end{equation}
 A {\em firm Frobenius monad} is a non-unital Frobenius monad which is a firm
monad. 

Let $M:\mathbb A \to \mathbb A$ be a non-unital Frobenius monad. As in the
case of any non-unital monad $M$, we denote by $\mathbb A_M$ the category of
non-unital $M$-modules and we denote by $\mathbb A_{(M)}$ the full subcategory
of firm $M$-modules. The usual Eilenberg-Moore category of counital comodules
for the comonad $M$ will be denoted by $\mathbb A^M$ with corresponding
forgetful functor $U^M:\mathbb A^M\to \mathbb A$ and its right adjoint $F^M:
A\stackrel \phi \to A' \mapsto (MA,\delta A)\stackrel {M\phi}\longrightarrow
(MA',\delta A')$. 

\subsection{Non-unital Frobenius adjunction}
We say that a (firm) non-unital adjunction $\varphi:FU\to \mathbb B$ is {\em
Frobenius} if $U$ is the left adjoint of $F$ (in the usual sense, with unit
$\eta:\mathbb B \to FU$ and counit $\varepsilon:UF \to \mathbb A$). Then $UF$
is a (firm) non-unital Frobenius monad, with multiplication $U\varphi F$,
comultiplication $U\eta F$ and counit $\varepsilon$. The aim of the next
sections is to prove the converse: to associate a (firm) non-unital Frobenius
adjunction to any (firm) non-unital Frobenius monad. 

\subsection{Firm Frobenius monads versus adjunctions to firm modules} 
\begin{proposition}\label{prop:firm_Frob_adj}
Any firm Frobenius monad $M:\mathbb A \to \mathbb A$ determines a firm
Frobenius adjunction $F_{(M)}U_{(M)}\to \mathbb A_{(M)}$ as in Section
\ref{sec:firm_monad} such that $U_{(M)}F_{(M)}=M$ as firm Frobenius
monads. 
\end{proposition}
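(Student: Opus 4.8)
The plan is to build, on top of the firm non-unital adjunction $F_{(M)}U_{(M)}\to \mathbb A_{(M)}$ that $M$ already carries as a firm monad (Section \ref{sec:firm_monad}), whose structure natural transformation $\varphi$ is the action $\alpha$, the extra data making $U_{(M)}$ the left adjoint of $F_{(M)}$. Since $U_{(M)}F_{(M)}=M$, the counit of the sought adjunction is forced to be the comonad counit $\varepsilon:M\to \mathbb A$, so the entire problem reduces to producing a unit $\eta:\mathbb A_{(M)}\to F_{(M)}U_{(M)}$, verifying the two triangle identities, and finally matching the recovered Frobenius monad structure with $(\mu,\delta,\varepsilon)$.

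The heart of the argument is the construction of $\eta$. Fix a firm module $(A,\alpha)$; its image $F_{(M)}U_{(M)}(A,\alpha)$ is the firm module $(MA,\mu A)$. I would define $\eta_{(A,\alpha)}$ through the coequalizer \eqref{eq:firm_module}, which by firmness is a coequalizer in $\mathbb A_M$. Consider $h:=M\alpha\cdot \delta A:MA\to MA$. Using the Frobenius compatibility \eqref{Frob} in the form $\delta A\cdot \mu A=M\mu A\cdot \delta MA=\mu MA\cdot M\delta A$, the module associativity \eqref{ass}, and naturality of $\mu$ and $\delta$, one checks that $h$ is an endomorphism of $(MA,\mu A)$ in $\mathbb A_M$ and that $h\cdot \mu A=h\cdot M\alpha$. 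The universal property of the coequalizer in $\mathbb A_M$ then produces a unique $\mathbb A_M$-morphism $\eta_{(A,\alpha)}:(A,\alpha)\to (MA,\mu A)$ with $\eta_{(A,\alpha)}\cdot \alpha=M\alpha\cdot \delta A$; as both modules are firm, it is a morphism in $\mathbb A_{(M)}$. Naturality of $\eta$ follows by precomposing the relevant square with the epimorphism $\alpha$ and invoking naturality of $\delta$.

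For the triangle identities I take the counit $\varepsilon$. The first identity reads $\varepsilon A\cdot \eta_{(A,\alpha)}=\mathrm{id}_A$: precomposing with the epimorphism $\alpha$ turns the left-hand side into $\varepsilon A\cdot M\alpha\cdot \delta A$, which by naturality of $\varepsilon$ and the counit law $\varepsilon M\cdot \delta=\mathrm{id}$ collapses to $\alpha$, whence the identity. The second identity reads $M\varepsilon A\cdot \eta_{(MA,\mu A)}=\mathrm{id}_{MA}$: precomposing with the epimorphism $\mu A$ (available since $M$ is a firm monad), then using the defining relation of $\eta_{(MA,\mu A)}$, the Frobenius relation $M\mu A\cdot \delta MA=\delta A\cdot \mu A$, and the counit law $M\varepsilon\cdot \delta=\mathrm{id}$, the left-hand side reduces to $\mu A$. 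This establishes $U_{(M)}\dashv F_{(M)}$, so the firm non-unital adjunction is Frobenius. It remains to identify the recovered structure: the induced multiplication $U_{(M)}\varphi F_{(M)}$ is the action of $(MA,\mu A)$, namely $\mu A$; the induced counit is $\varepsilon$; and the induced comultiplication $U_{(M)}\eta F_{(M)}$ is $\eta_{(MA,\mu A)}$, which satisfies $\eta_{(MA,\mu A)}\cdot \mu A=M\mu A\cdot \delta MA=\delta A\cdot \mu A$ and hence equals $\delta A$ since $\mu A$ is epi. Thus the recovered Frobenius monad is $(\mu,\delta,\varepsilon)$, i.e. $U_{(M)}F_{(M)}=M$ as firm Frobenius monads.

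I expect the main obstacle to be the construction of $\eta$, specifically showing that $h=M\alpha\cdot \delta A$ descends through \eqref{eq:firm_module}, that is, that it is simultaneously an $\mathbb A_M$-endomorphism of $(MA,\mu A)$ and coequalizes the pair $(\mu A,M\alpha)$. This is precisely where the Frobenius compatibility \eqref{Frob} enters essentially, and where firmness is indispensable: it supplies the coequalizer in $\mathbb A_M$ used to define $\eta$, and, through the epimorphisms $\alpha$ and $\mu A$, it is what permits the cancellations in the triangle identities and in the identification of the comultiplication with $\delta$.
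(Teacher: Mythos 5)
Your proof is correct and follows the paper's own argument essentially step for step: the counit is the comonad counit $\varepsilon$, the unit is induced from $M\alpha\cdot\delta A$ through the firmness coequalizer \eqref{eq:firm_module} (using the Frobenius condition \eqref{Frob} to see that $\delta A$ is a module morphism and that the pair $(\mu A, M\alpha)$ is coequalized), naturality and the triangle identities are checked by cancelling the epimorphisms $\alpha$ and $\mu A$, and the comultiplication is recovered via $\eta F_{(M)}A=\delta A$. The only cosmetic difference is that where you cancel the epimorphism $\mu A$, the paper invokes uniqueness in the universal property of the coequalizer to conclude $\eta F_{(M)}A=\delta A$ directly --- the same computation in different garb.
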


\begin{proof}
The counit of the adjunction $U_{(M)}\dashv F_{(M)}$ is the counit
$\varepsilon:U_{(M)}F_{(M)}=M\to \mathbb A$ of the comonad $M$, and the unit
$\eta:\mathbb A_{(M)} \to F_{(M)}U_{(M)}$ is defined via universality of the
coequalizer (in $\mathbb A_{(M)}$) in the top row of 
$$
\xymatrix@C=40pt{
M^2A\ar@<2pt>[r]^-{\mu A}\ar@<-2pt>[r]_-{M\alpha}\ar[d]_-{\delta MA}&
MA\ar[r]^-\alpha\ar[d]^-{\delta A}&
A\ar@{-->}[d]^-{\eta (A,\alpha)}\\
M^3 A \ar@<2pt>[r]^-{M\mu A}\ar@<-2pt>[r]_-{M^2\alpha}&
M^2 A \ar[r]^-{M\alpha}&
MA,}
$$
for any firm $M$-module $(A,\alpha)$. The square on the left commutes serially
(in the sense of \cite[page 72]{TTT}) by the Frobenius condition \eqref{Frob}
and by naturality of $\delta$. The bottom row is a fork by the associativity
of $\alpha$. By the Frobenius property \eqref{Frob} of $M$, $\delta A$ is a
morphism of non-unital $M$-modules, hence so is $M\alpha \cdot \delta A$. This
proves the existence and the uniqueness of the morphism of non-unital
$M$-modules $\eta(A,\alpha):(A,\alpha)\to (MA,\mu A)$. 

Naturality of $\eta$ follows from commutativity of the diagram 
$$
\xymatrix@R=10pt{ 
A \ar^-{\eta(A,\alpha)}[rrr] \ar_-{\phi}[ddd] & & & 
MA \ar^-{M \phi}[ddd] \\ 
& MA \ar^-{\delta A}[r] \ar^-{\alpha}[lu] \ar^-{M \phi}[d] & 
M^2 A \ar_-{M \alpha}[ru] \ar^-{M^2 \phi}[d] & \\
& MA' \ar^-{\delta A'}[r] \ar_-{\alpha'}[dl] & M^2 A' \ar^-{M \alpha'}[dr]& \\
A' \ar^-{\eta(A',\alpha')}[rrr] & & & MA' }
$$
for any morphism of firm $M$-modules $\phi:(A,\alpha)\to (A',\alpha')$,
because $\alpha$ is an epimorphism (both in $\mathbb A_{(M)}$ and $\mathbb
A$). 

It remains to check that $\eta$ and $ \varepsilon $ satisfy the triangular
identities. Given an object $A$ of $\mathbb A$, $\eta F_{(M)} A =\eta(MA,\mu
A)$ is the unique morphism rendering commutative the diagram 
$$
\xymatrix@R=10pt@C=60pt
{M^2 A \ar^-{\mu A}[r] \ar_-{\delta M A}[d] & MA \ar^-{\eta F_{(M)} A}[d]\\
M^3A \ar^-{M \mu A}[r]& M^2 A.}
$$
So by the Frobenius condition \eqref{Frob}, $\eta F_{(M)} A =
\delta A$. Thus, 
$F_{(M)} \varepsilon \cdot \eta F_{(M)} = 
M \varepsilon \cdot \delta = F_{(M)}$.
Since $\alpha$ is an epimorphism in $\mathbb A$ for all $(A,\alpha)\in \mathbb
A_{(M)}$, the other triangle condition follows by the commutativity of 
$$
\xymatrix@C=45pt@R=10pt{
A= U_{(M)} (A, \alpha) \ar^-{U_{(M)}\eta (A,\alpha)}[r] & 
M A \ar^-{ \varepsilon U_{(M)} (A,\alpha)}[r] & U_{(M)} (A, \alpha)= A \\
MA \ar^-{\alpha}[u] \ar^-{\delta A}[r] \ar@{=}@/_1pc/[rr]& 
M^2 A \ar^-{M\alpha}[u] \ar^-{ \varepsilon M A}[r]& 
MA. \ar^-{\alpha}[u] }
$$
\end{proof}

For any firm Frobenius monad $M:\mathbb A \to \mathbb A$, the unit
$\eta:\mathbb A_{(M)}\to F_{(M)} U_{(M)}$ of the adjunction $U_{(M)}\dashv
F_{(M)}$ in Proposition \ref{prop:firm_Frob_adj} induces a functor $
K_{(M)}$ rendering commutative 
\begin{equation}\label{eq:K}
\xymatrix@C=60pt{
&\mathbb A^M\ar[d]^-{U^M}\\
\mathbb A_{(M)} \ar[ru]^-{K_{(M)}}\ar[r]^-{U_{(M)}}&
\mathbb A,}
\end{equation}
since $U_{(M)} F_{(M)} = M$ as comonads. Explicitly, $K_{(M)}$ is given
by 
$$
(A,MA\stackrel {\alpha\,} \to A)
\stackrel \phi \to 
(A',MA'\stackrel {\alpha'} \to A') \mapsto 
(A,A\stackrel{\eta (A,\alpha)} \longrightarrow MA)
\stackrel \phi \to
(A',A'\stackrel{\eta (A',\alpha')} \longrightarrow MA').
$$
 
\subsection{Non-unital Frobenius monads versus adjunctions to comodules}
In order to associate a non-unital Frobenius adjunction to any, not
necessarily firm, non-unital Frobenius monad, we shall work with the category 
of comodules instead of the categories of firm or non-unital modules
in the previous sections. 

Let $M:\mathbb A \to \mathbb A$ be a non-unital Frobenius monad. For any
$M$-comodule $(A,\alpha)$, throughout the paper the notation 
\begin{equation}\label{eq:alphabar}
\overline \alpha:=
\xymatrix@C=25pt{
MA\ar[r]^-{M\alpha}&
M^2A\ar[r]^-{\mu A}&
MA \ar[r]^-{\varepsilon A}&
A}
\end{equation}
will be used. 

\begin{lemma}\label{lem:aux}
Let $M:\mathbb A \to \mathbb A$ be a non-unital Frobenius monad. Using the
notation in \eqref{eq:alphabar}, for any $M$-comodule $(A,\alpha)$ the 
following assertions hold.
\begin{itemize}
\item[{(1)}]
The coaction $\alpha$ obeys $\mu A\cdot M\alpha=\alpha\cdot \overline
\alpha$. 
\item[{(2)}]
The identity $\alpha\cdot \overline \alpha= M \overline \alpha\cdot
\delta A$ holds. That is, $\overline\alpha$ is a morphism of $M$-comodules
$(MA, \delta A) \to (A, \alpha)$. 
\end{itemize} 
\end{lemma}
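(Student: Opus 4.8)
The plan is to prove both identities by a direct computation with the structure maps, the only non-formal ingredient being the Frobenius condition \eqref{Frob} evaluated at the object $A$, namely
$$
\mu MA\cdot M\delta A \;=\; \delta A\cdot \mu A \;=\; M\mu A\cdot \delta MA .
$$
For part (2) I first note that, \emph{granting} (1), the asserted identity $\alpha\cdot\overline\alpha = M\overline\alpha\cdot\delta A$ is exactly the statement that $\overline\alpha$ is a morphism of $M$-comodules $(MA,\delta A)\to(A,\alpha)$; so it suffices to establish the single equation $\mu A\cdot M\alpha = M\overline\alpha\cdot\delta A$ and combine it with (1). Thus the whole lemma reduces to two rewriting chains, each running between $\mu A\cdot M\alpha$ and the relevant composite, and the two chains will turn out to be mirror images of one another.

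For (1) I would start from $\alpha\cdot\overline\alpha = \alpha\cdot\varepsilon A\cdot\mu A\cdot M\alpha$ and move the counit to the left using naturality of $\varepsilon$, $\alpha\cdot\varepsilon A=\varepsilon MA\cdot M\alpha$, then slide $\mu$ past the outer $M\alpha$ by naturality of $\mu$, $M\alpha\cdot\mu A=\mu MA\cdot M^2\alpha$. This exposes the factor $M^2\alpha\cdot M\alpha$, which by coassociativity of the coaction (after applying $M$) equals $M\delta A\cdot M\alpha$. Now the block $\mu MA\cdot M\delta A$ appears, and the \emph{first} Frobenius equality rewrites it as $\delta A\cdot\mu A$; finally the counit law $\varepsilon MA\cdot\delta A=\mathrm{id}_{MA}$ collapses the leading factors, returning $\mu A\cdot M\alpha$. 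This proves (1).

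For (2) I would run the parallel chase on $M\overline\alpha\cdot\delta A$. Expanding $M\overline\alpha=M\varepsilon A\cdot M\mu A\cdot M^2\alpha$ and using naturality of $\delta$ in the form $M^2\alpha\cdot\delta A=\delta MA\cdot M\alpha$ brings the block $M\mu A\cdot\delta MA$ into view, which the \emph{other} Frobenius equality turns into $\delta A\cdot\mu A$; the \emph{other} counit triangle, $M\varepsilon A\cdot\delta A=\mathrm{id}_{MA}$, then collapses the front and again leaves $\mu A\cdot M\alpha$. Together with (1) this yields $M\overline\alpha\cdot\delta A=\alpha\cdot\overline\alpha$, i.e.\ (2).

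I do not expect any genuine obstacle: both parts are formal, and no limit, adjunction, or firmness hypothesis is invoked. The only point requiring care is bookkeeping — tracking the whiskering indices (distinguishing $\varepsilon MA$ from $M\varepsilon A$, and $\mu MA$ from $M\mu A$) and, at the single Frobenius step in each part, selecting the correct one of the two equalities in \eqref{Frob} together with the matching counit triangle. The symmetry between the two computations, with (1) using the left half $\mu MA\cdot M\delta A=\delta A\cdot\mu A$ paired with $\varepsilon M\cdot\delta=\mathrm{id}$ and (2) using the right half $\delta A\cdot\mu A=M\mu A\cdot\delta MA$ paired with $M\varepsilon\cdot\delta=\mathrm{id}$, serves as a convenient consistency check that the indices have been chosen correctly.
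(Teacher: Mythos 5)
Your proof is correct and coincides with the paper's own argument: your rewriting chain for (1) is exactly the commutative diagram in the paper (naturality of $\varepsilon$ and $\mu$, $M$ applied to coassociativity of $\alpha$, the Frobenius equality $\mu M\cdot M\delta=\delta\cdot\mu$, and the counit law $\varepsilon M\cdot\delta=\mathrm{id}$), and for (2) the paper likewise establishes $M\overline\alpha\cdot\delta A=\mu A\cdot M\alpha$ via naturality of $\delta$, the other Frobenius equality and counit triangle, and then invokes part (1). No gaps; all whiskering indices and the choice of Frobenius/counit identities in your computation are correct.
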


\begin{proof}
(1). The claim follows from the commutativity of the diagram
$$ \xymatrix@R=15pt@C=40pt{
M A \ar[d]_-{M \alpha} \ar[r]^-{M \alpha}&
M^2 A \ar[d]_-{M^2 \alpha} \ar[r]^-{\mu A}& 
MA \ar[r]^-{ \varepsilon A}\ar[d]^-{M\alpha}&
A \ar[dd]^-{\alpha}\\
M^2 A \ar[d]_-{\mu A}\ar[r]^-{M \delta A} &
M^3 A \ar[r]^-{\mu MA}&
M^2 A \ar[dr]^(.6){ \varepsilon M A}\\
MA \ar[rru]^(.4){\delta A} \ar@{=}[rrr] &&&
MA. }
$$

(2). By the naturality and the counitality of $\delta$ and the Frobenius
condition \eqref{Frob}, $M \overline \alpha\cdot \delta A=\mu A\cdot
M\alpha$. So the claim follows by part (1).
\end{proof}

\begin{proposition}\label{prop:nu_Frob_adj}
Any non-unital Frobenius monad $M:\mathbb A \to \mathbb A$ determines a
non-unital Frobenius adjunction $F^M U^M\to \mathbb A^M$ such that $U^M F^M=M$
as non-unital Frobenius monads. 
\end{proposition}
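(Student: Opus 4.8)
The plan is to equip the ordinary Eilenberg--Moore comodule adjunction $U^M\dashv F^M$ with the natural transformation supplied by Lemma~\ref{lem:aux}, and then match the resulting induced structure with $(M,\mu,\delta,\varepsilon)$. Recall that $U^M$ is the \emph{left} adjoint of the cofree functor $F^M$, with counit $\varepsilon A\colon U^MF^MA=MA\to A$ the counit of the comonad $M$, and with unit $\eta(A,\alpha)=\alpha$: indeed, by coassociativity the coaction $\alpha$ is a comodule morphism $(A,\alpha)\to(MA,\delta A)=F^MU^M(A,\alpha)$. Since $U^M\dashv F^M$ holds in the usual sense, any non-unital adjunction $\varphi\colon F^MU^M\to\mathbb A^M$ that we adjoin will automatically be Frobenius in the required sense; the only thing to construct is $\varphi$ itself.

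For the component at a comodule $(A,\alpha)$ I would take $\varphi(A,\alpha):=\overline\alpha$, the morphism of \eqref{eq:alphabar}. By Lemma~\ref{lem:aux}(2) this is a morphism of $M$-comodules $F^MU^M(A,\alpha)=(MA,\delta A)\to(A,\alpha)$, so it has the correct type to be a component of $\varphi\colon F^MU^M\to\mathbb A^M$. Naturality is a short chase: for a comodule morphism $f\colon(A,\alpha)\to(A',\alpha')$ one rewrites $\overline{\alpha'}\cdot Mf=\varepsilon A'\cdot\mu A'\cdot M\alpha'\cdot Mf$, uses $M\alpha'\cdot Mf=M^2f\cdot M\alpha$ (from $\alpha'\cdot f=Mf\cdot\alpha$), and then the naturality of $\mu$ and of $\varepsilon$ to arrive at $f\cdot\overline\alpha$. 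This presents $\varphi$ as a non-unital adjunction in the sense of Section~\ref{sec:nu_adj}, and by the previous paragraph as a Frobenius one; hence $U^MF^M$ inherits a non-unital Frobenius monad structure with multiplication $U^M\varphi F^M$, comultiplication $U^M\eta F^M$ and counit $\varepsilon$.

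It remains to identify this inherited structure with the original one. On underlying functors $U^MF^MA=MA$; the inherited comultiplication is $U^M\eta F^MA=\delta A$, the coaction of the cofree comodule $F^MA=(MA,\delta A)$, and the inherited counit is $\varepsilon A$, so the comonad data agree. The one substantive computation---which I expect to be the crux---is that the inherited multiplication $U^M\varphi F^MA=\overline{\delta A}$ equals $\mu A$. Expanding, $\overline{\delta A}=\varepsilon MA\cdot\mu MA\cdot M\delta A$; the Frobenius condition \eqref{Frob} in the form $\mu MA\cdot M\delta A=\delta A\cdot\mu A$ turns this into $\varepsilon MA\cdot\delta A\cdot\mu A$, and the counit law $\varepsilon MA\cdot\delta A=\mathrm{id}_{MA}$ collapses it to $\mu A$. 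With multiplication, comultiplication and counit all matched, $U^MF^M=M$ as non-unital Frobenius monads, which completes the argument.
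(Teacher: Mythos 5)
Your proposal is correct and follows essentially the same route as the paper's proof: take the standard adjunction $U^M\dashv F^M$, use the comodule morphisms $\overline\alpha$ from Lemma~\ref{lem:aux}(2) as the components of the non-unital adjunction, verify naturality via naturality of $\mu$ and $\varepsilon$, and identify the induced multiplication through the computation $\overline{\delta A}=\varepsilon MA\cdot\mu MA\cdot M\delta A=\varepsilon MA\cdot\delta A\cdot\mu A=\mu A$, which is word-for-word the paper's key step. Your additional explicit checks (that the unit of $U^M\dashv F^M$ is the coaction itself, and that the comonad data match) are correct details the paper leaves implicit.
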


\begin{proof}
For any comonad $M$, $U^M \dashv F^M$ and $U^M F^M=M$ as comonads. A
non-unital adjunction $F^M U^M\to \mathbb A^M$ is provided by the $M$-comodule
morphisms $\overline \alpha: (MA,\delta A)= F^M U^M(A,\alpha)\to (A,\alpha)$
in Lemma \ref{lem:aux}~(2). In light of \eqref{eq:alphabar}, their naturality
follows by the naturality of $\mu$ and $\varepsilon$. The equality
$U^MF^M=M$ of non-unital monads follows by 
$\overline{\delta A}=
\varepsilon MA\cdot \mu MA\cdot M\delta A=
\varepsilon MA\cdot \delta A \cdot \mu A =
\mu A
$, cf. \eqref{eq:alphabar}.
\end{proof}

For any non-unital Frobenius monad $M:\mathbb A \to \mathbb A$, corresponding
to the non-unital adjunction $F^MU^M\to \mathbb A^M$ in Proposition
\ref{prop:nu_Frob_adj}, there is an induced functor $L_M:\mathbb A^M\to
\mathbb A_M$ as in Section \ref{sec:nu_adj}. It renders commutative the
diagram 
\begin{equation}\label{eq:V_M}
\xymatrix@C=60pt{
&\mathbb A_M\ar[d]^-{U_M}\\
\mathbb A^M \ar[ru]^-{L_M}\ar[r]^-{U^M}&
\mathbb A}
\end{equation}
and sends $(A,A\stackrel {\alpha\,} \to MA) \stackrel \phi \to (A',A'\stackrel
{\alpha'} \to MA')$ to $(A,\overline \alpha) \stackrel \phi \to
(A',\overline{\alpha}')$, cf. \eqref{eq:alphabar}. 

\section{Modules and comodules of a firm Frobenius monad}

The aim of this section is to see when the functor $K_{(M)}$,
associated in \eqref{eq:K} to a firm Frobenius monad $M$, is an
isomorphism. 

\begin{proposition} \label{prop:K_fullyfaithful}
For any firm Frobenius monad $M:\mathbb A \to \mathbb A$, the functor $
K_{(M)}$ in \eqref{eq:K} is fully faithful. 
\end{proposition}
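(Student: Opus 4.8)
The functor $K_{(M)}$ is faithful for free: the composite $U^M K_{(M)}=U_{(M)}$ of \eqref{eq:K} is faithful, since the forgetful functor $U_{(M)}$ is, and a faithful composite forces $K_{(M)}$ to be faithful. The entire content is therefore \emph{fullness}, and the plan is to use the functor $L_M\colon\mathbb A^M\to\mathbb A_M$ of \eqref{eq:V_M} as a one-sided inverse of $K_{(M)}$ on objects, exploiting that both $L_M$ and $K_{(M)}$ act as the identity on underlying morphisms.

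Concretely, I would first establish that $L_MK_{(M)}(A,\alpha)=(A,\alpha)$ in $\mathbb A_M$ for every firm module $(A,\alpha)$. Writing $\eta:=\eta(A,\alpha)\colon A\to MA$ for the coaction constructed in Proposition \ref{prop:firm_Frob_adj}, this amounts to the single identity $\overline\eta=\alpha$, where $\overline\eta=\varepsilon A\cdot\mu A\cdot M\eta$ in the notation of \eqref{eq:alphabar}. The decisive observation is the relation
\[
\mu A\cdot M\eta=\eta\cdot\alpha=M\alpha\cdot\delta A ,
\]
whose first equality expresses that $\eta$ is a morphism of $M$-modules $(A,\alpha)\to(MA,\mu A)$, and whose second equality is the defining property of $\eta$ coming from the coequalizer used in Proposition \ref{prop:firm_Frob_adj}. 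Granting this, the computation closes at once: substituting $\mu A\cdot M\eta=M\alpha\cdot\delta A$ yields $\overline\eta=\varepsilon A\cdot M\alpha\cdot\delta A$; naturality of $\varepsilon$ rewrites this as $\alpha\cdot\varepsilon MA\cdot\delta A$; and counitality of the comonad $M$ collapses $\varepsilon MA\cdot\delta A$ to the identity, leaving $\overline\eta=\alpha$.

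With $L_MK_{(M)}=\mathrm{Id}$ on objects in hand, fullness is then formal. Given any morphism $\psi\colon K_{(M)}(A,\alpha)\to K_{(M)}(A',\alpha')$ of $M$-comodules, applying $L_M$ produces a morphism in $\mathbb A_M$ between $L_MK_{(M)}(A,\alpha)=(A,\alpha)$ and $L_MK_{(M)}(A',\alpha')=(A',\alpha')$; since $L_M$ is the identity on underlying morphisms, this morphism is $\psi$ itself, so $\psi$ is a morphism of non-unital $M$-modules $(A,\alpha)\to(A',\alpha')$. As $\mathbb A_{(M)}$ is a full subcategory of $\mathbb A_M$ and both objects are firm, $\psi$ is already a morphism in $\mathbb A_{(M)}$, and $K_{(M)}\psi=\psi$ recovers the given comodule morphism.

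The main obstacle is the identity $\overline\eta=\alpha$; the rest is bookkeeping with the forgetful functors. The delicate point is that trying to deduce $\overline\eta=\alpha$ by cancelling an epimorphism (using that $\alpha$, or $\mu A$, is epi) runs into the symmetry between the two $M$-module structures $\alpha$ and $\overline\eta$ and closes up tautologically. The right move is instead to bundle the module-morphism property of $\eta$ together with its defining coequalizer equation into the single relation $\mu A\cdot M\eta=M\alpha\cdot\delta A$, after which only naturality and counitality of $\varepsilon$ are required.
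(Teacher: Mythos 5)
Your proposal is correct and takes essentially the same route as the paper: both arguments hinge on showing $L_M K_{(M)}=J$ via the single identity $\overline{\eta(A,\alpha)}=\alpha$, and then deduce fullness by applying $L_M$ to a comodule morphism and using that $\mathbb A_{(M)}$ is a full subcategory of $\mathbb A_M$. The only cosmetic deviations are that you derive $\overline{\eta(A,\alpha)}=\alpha$ from the defining coequalizer relation $\eta(A,\alpha)\cdot\alpha=M\alpha\cdot\delta A$ together with naturality and counitality of $\varepsilon$ (thereby inlining the triangle identity that the paper cites from Proposition \ref{prop:firm_Frob_adj}), and that you obtain faithfulness from faithfulness of $U_{(M)}$ rather than of $J$; both variants are sound.
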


\begin{proof}
For any firm $M$-module $(A, \alpha)$ and the functor $L_M$ in 
\eqref{eq:V_M}, 
$$
{L_M K_{(M)}} (A,\alpha) = 
(A, \xymatrix@C= 35pt{
MA \ar^-{M \eta (A, \alpha)}[r]& 
M^2 A \ar^-{\mu A}[r]& 
MA \ar^-{ \varepsilon A}[r]& 
A}).
$$
Since $\eta (A,\alpha)$ is a morphism of non-unital $M$-modules, and by one of
the triangle identities on the adjunction $U_{(M)}\dashv F_{(M)}$,
$\varepsilon A \cdot \mu A \cdot M \eta (A,\alpha) = \varepsilon A \cdot\eta
(A,\alpha)\cdot \alpha= \alpha$; that is, $L_M K_{(M)} =
J$. Since $J$ is faithful, we get that $K_{(M)}$ is faithful,
too. In order to see that $K_{(M)}$ is full, take a morphism 
$$
\xymatrix{K_{(M)} (A, \alpha) = (A, \eta (A,\alpha)) 
\ar^-{\phi}[r] & 
 K_{(M)} (A', \alpha) = (A', \eta (A',\alpha')) }
$$
in $\mathbb{A}^M$. Then 
$$
\xymatrix@C=40pt{
L_M K_{(M)} (A, \alpha) = (A, \alpha) \ar^-{ L_M 
\phi=\phi}[r] & 
 L_M K_{(M)} (A', \alpha) = (A', \alpha') }
$$
belongs to the full subcategory $\mathbb{A}_{(M)}$, and applying to it $
K_{(M)}$, we re-obtain $\phi$.
\end{proof}

\begin{theorem}\label{thm:iso}
For a non-unital Frobenius monad $M:\mathbb A \to \mathbb A$, the following
assertions are equivalent. 
\begin{itemize}
\item[{(1)}] 
The non-unital Frobenius adjunction $F^M U^M\to \mathbb A^M$ in
Proposition \ref{prop:nu_Frob_adj} is a firm Frobenius adjunction. 
That is, for any $M$-comodule $(A,\alpha)$, $\overline \alpha$ in
\eqref{eq:alphabar} is an epimorphism in $\mathbb A$ and there is a
coequalizer 
$$
\xymatrix@C=40pt{
M^2A\ar@<2pt>[r]^-{\mu A}\ar@<-2pt>[r]_-{M\overline{\alpha}}&
MA \ar[r]^-{\overline \alpha}&A}\qquad 
\textrm{in}\ \mathbb A_{M}.
$$
\item[{(2)}] 
$M$ is a firm Frobenius monad and the functor $K_{(M)}$ in \eqref{eq:K}
is an isomorphism. 
\end{itemize}
\end{theorem}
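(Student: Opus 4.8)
The plan is to recognize that condition (1) says exactly that the comparison functor $L_M$ of \eqref{eq:V_M} lands in the subcategory $\mathbb A_{(M)}$ of firm modules, and then to play off the already-established relation $L_M K_{(M)} = J$ (from the proof of Proposition \ref{prop:K_fullyfaithful}) against Lemma \ref{lem:aux} in order to exhibit $K_{(M)}$ and the induced factorization of $L_M$ as mutually inverse functors. First I would unwind (1): applying the definition of firm adjunction in Section \ref{sec:firm_adj} to $F^M U^M\to \mathbb A^M$, and using $\overline{\delta A}=\mu A$ from the proof of Proposition \ref{prop:nu_Frob_adj} to identify the top map in \eqref{eq:firmadj}, firmness of this adjunction means precisely that for every comodule $(A,\alpha)$ the non-unital module $L_M(A,\alpha)=(A,\overline\alpha)$ is firm; equivalently that $L_M$ factorizes as $J L_{(M)}$ through some $L_{(M)}:\mathbb A^M\to \mathbb A_{(M)}$.

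For (1)$\Rightarrow$(2) I would first deduce that $M$ is a firm monad by applying firmness of the adjunction to the free comodules $F^M X=(MX,\delta X)$: this gives that $(MX,\overline{\delta X})=(MX,\mu X)$ is firm for every $X$, so $F_M$ factorizes through $\mathbb A_{(M)}$ and $M$ is firm, whence $K_{(M)}$ is available via Proposition \ref{prop:firm_Frob_adj}. From $L_M K_{(M)}=J$ and $L_M=J L_{(M)}$, cancelling the full embedding $J$ (faithful and injective on objects) gives $L_{(M)} K_{(M)}=\mathrm{id}$. For the opposite composite I compute $K_{(M)} L_{(M)}(A,\alpha)=(A,\eta(A,\overline\alpha))$ and show $\eta(A,\overline\alpha)=\alpha$: by construction $\eta(A,\overline\alpha)$ is the unique non-unital $M$-module morphism $(A,\overline\alpha)\to(MA,\mu A)$ with $\eta(A,\overline\alpha)\cdot\overline\alpha=M\overline\alpha\cdot\delta A$, and Lemma \ref{lem:aux}(1) says exactly that $\alpha$ is such a module morphism (it gives $\mu A\cdot M\alpha=\alpha\cdot\overline\alpha$) while Lemma \ref{lem:aux}(2) gives exactly the defining equation $\alpha\cdot\overline\alpha=M\overline\alpha\cdot\delta A$. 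Since $\overline\alpha$ is an epimorphism, uniqueness forces $\alpha=\eta(A,\overline\alpha)$, so $K_{(M)} L_{(M)}=\mathrm{id}$ on objects, and since all the functors involved act as the identity on underlying morphisms the functor equality follows. Thus $K_{(M)}$ is an isomorphism with inverse $L_{(M)}$.

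For (2)$\Rightarrow$(1) the argument is short: with $M$ firm, $K_{(M)}$ is defined and $L_M K_{(M)}=J$ holds by Proposition \ref{prop:K_fullyfaithful}; if $K_{(M)}$ is an isomorphism then $L_M=J K_{(M)}^{-1}$, which exhibits $L_M$ as factorizing through $J$ via $K_{(M)}^{-1}:\mathbb A^M\to\mathbb A_{(M)}$. Hence each $L_M(A,\alpha)=(A,\overline\alpha)$ is a firm module, which is precisely the assertion that the adjunction $F^M U^M\to\mathbb A^M$ is firm, i.e. condition (1).

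The crux of the whole proof is the verification that $K_{(M)} L_{(M)}=\mathrm{id}$, that is, that the unit $\eta(A,\overline\alpha)$ recovers the original coaction $\alpha$. The remaining steps are formal bookkeeping with the identity $L_M K_{(M)}=J$ and cancellation of the full embedding $J$; the genuine input is the observation that the two parts of Lemma \ref{lem:aux} furnish respectively the module-morphism property and the defining equation of the unit, so that epi-ness of $\overline\alpha$ together with uniqueness pins down $\alpha=\eta(A,\overline\alpha)$.
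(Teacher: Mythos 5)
Your proof is correct and follows essentially the same route as the paper: factor $L_M=JL_{(M)}$, use $L_MK_{(M)}=J$ from Proposition \ref{prop:K_fullyfaithful} to cancel the full embedding $J$, and identify $\eta(A,\overline\alpha)=\alpha$ via epi-ness of $\overline\alpha$, with $(2)\Rightarrow(1)$ given by $L_M=JK_{(M)}^{-1}$. Your only (harmless, in fact slightly cleaner) deviations are citing Lemma \ref{lem:aux}(2) directly for the defining equation $\alpha\cdot\overline\alpha=M\overline\alpha\cdot\delta A$, where the paper re-derives it by a diagram chase using Lemma \ref{lem:aux}(1), and spelling out via the free comodules $F^MX$ why (1) forces $M$ to be firm, which the paper delegates to the general remark in Section \ref{sec:firm_adj}.
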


\begin{proof}
$(1) \Rightarrow (2).$ 
Since $M$ arises from a firm Frobenius adjunction in (1), it is a
firm Frobenius monad. 
Let $\mathbb{A}^M \stackrel {L_{(M)}\,} \longrightarrow
\mathbb{A}_{(M)} \stackrel J \to \mathbb{A}_M$ be a factorization of $
L_M$. We claim that $ L_{(M)}$ provides the inverse of $
K_{(M)}$. We know from the proof of Proposition \ref{prop:K_fullyfaithful}
that $J L_{(M)} K_{(M)} = L_M K_{(M)} = J$, so that
$L_{(M)} K_{(M)} = \mathbb{A}_{(M)}$. For any $M$-comodule
$(A,\alpha)$, $L_{(M)}(A,\alpha) = (A, \overline{\alpha})$
in \eqref{eq:alphabar} is firm by assumption. So $K_{(M)}
L_{(M)}(A,\alpha) = (A, \eta (A, \overline{\alpha}))$, and the proof is
complete if we show $\eta (A,\overline{\alpha}) =\alpha$. Since
$\overline{\alpha}$ is an epimorphism in $\mathbb A$, this follows by
commutativity of the diagram 
$$
\xymatrix@R=15pt@C=40pt{
A\ar[rrr]^-{\eta(A,\overline\alpha)}\ar[ddd]_-\alpha&&&
MA\ar@{=}[ddd]\\
& MA\ar[lu]^-{\overline\alpha}\ar[d]^-{M\alpha}\ar[r]^-{\delta A}&
M^2A\ar[ru]_-{M\overline\alpha}\ar[d]^-{M^2\alpha}\\
& M^2A\ar[r]^-{\delta MA}\ar[ld]_-{\mu A}&
M^3A\ar[d]^-{M\mu A}\\
MA\ar@{=}@/_1pc/[rrr]\ar[rr]^-{\delta A}&&
M^2 A \ar[r]^-{M\varepsilon A}&
MA}
$$
\medskip

\noindent
where the region on the left commutes by Lemma \ref{lem:aux}~{(1)}.

$(2) \Rightarrow (1)$ Since $L_M K_{(M)} = J$,
$L_M = J K_{(M)} ^{-1}$ is the desired factorization.
\end{proof}
 
Next we look for situations when the equivalent conditions in Theorem
\ref{thm:iso} hold. 

\begin{proposition}\label{prop:one_side}
For a non-unital Frobenius monad $M:\mathbb A \to \mathbb A$, assume that
there exists a natural section (i.e. right inverse) $\nu$ of the
multiplication $\mu:M^2\to M$ rendering commutative 
$$
\xymatrix@C=60pt@R=15pt{
M^2\ar[r]^-\mu\ar[d]_-{\nu M}&
M\ar[d]^-\nu\\
M^3\ar[r]^-{M\mu}&
M^2.}
$$ 
Then the equivalent conditions in Theorem \ref{thm:iso} hold. 
\end{proposition}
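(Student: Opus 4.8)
The plan is to verify condition~(1) of Theorem~\ref{thm:iso}. By Lemma~\ref{lem:Malpha} it suffices, for each $M$-comodule $(A,\alpha)$, to exhibit the underlying fork
$$
M^2A\ \rightrightarrows\ MA\ \xrightarrow{\ \overline\alpha\ }\ A,
$$
with parallel morphisms $\mu A$ and $M\overline\alpha$, as a split coequalizer in $\mathbb A$. Indeed, a split coequalizer is absolute, so it is in particular a coequalizer in $\mathbb A$; moreover $\overline\alpha$ is then a split epimorphism, whence so is $M\overline\alpha$, and Lemma~\ref{lem:Malpha} promotes the fork to a coequalizer in $\mathbb A_M$ and shows $\overline\alpha$ is epi. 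This is exactly condition~(1), and by the theorem it holds for every comodule precisely when condition~(2) does.

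For the contraction I take the section $t:=\nu A$ of $\mu A$ and set $s:=M\varepsilon A\cdot\nu A\cdot\alpha\colon A\to MA$. The fork condition $\overline\alpha\cdot\mu A=\overline\alpha\cdot M\overline\alpha$ holds because $(A,\overline\alpha)=L_M(A,\alpha)$ is a non-unital $M$-module; the section identity $\mu A\cdot\nu A=1_{MA}$ is the hypothesis that $\nu$ is a right inverse of $\mu$; and the crossing identity $M\overline\alpha\cdot\nu A=s\cdot\overline\alpha$ is obtained by expanding $M\overline\alpha$ through \eqref{eq:alphabar} and applying, in turn, naturality of $\nu$ (giving $M^2\alpha\cdot\nu A=\nu MA\cdot M\alpha$), the assumed square (giving $M\mu A\cdot\nu MA=\nu A\cdot\mu A$), and Lemma~\ref{lem:aux}~(1) (giving $\mu A\cdot M\alpha=\alpha\cdot\overline\alpha$); these assemble to $M\overline\alpha\cdot\nu A=M\varepsilon A\cdot\nu A\cdot\alpha\cdot\overline\alpha=s\cdot\overline\alpha$.

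The remaining identity $\overline\alpha\cdot s=1_A$ is the step I expect to be the main obstacle, because the hypothesis on $\nu$ is one-sided whereas $\overline\alpha$ interleaves the monad and comonad structures, so the Frobenius condition must be used essentially to reconcile the two sides. To prove it I would first establish, from \eqref{Frob} together with the assumed square, the commutation $M\delta\cdot\nu=\nu M\cdot\delta$ of $\nu$ past the comultiplication. Granting this, one rewrites $\overline\alpha\cdot s$ by using the coassociativity $M\alpha\cdot\alpha=\delta A\cdot\alpha$ of the coaction to expose $\delta$, then transports $\nu$ across $\delta$ by the commutation just mentioned, absorbs the resulting comultiplication by counitality of $\varepsilon$, collapses the surviving $\nu$ by the section property of $\mu$, and finishes with counitality of $\alpha$; the net effect is $1_A$. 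With all splitting identities verified the fork is a split (hence absolute) coequalizer in $\mathbb A$, $\overline\alpha$ and $M\overline\alpha$ are split epimorphisms, and Lemma~\ref{lem:Malpha} yields the coequalizer in $\mathbb A_M$. As this holds for every $M$-comodule $(A,\alpha)$, condition~(1) of Theorem~\ref{thm:iso}, and therefore the equivalent condition~(2), holds.
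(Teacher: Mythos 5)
Your proposal is correct and takes essentially the same route as the paper's own proof: the paper likewise exhibits the fork as a contractible (split) coequalizer in $\mathbb A$ with exactly your contraction morphisms $t=\nu A$ and $s=M\varepsilon A\cdot \nu A\cdot \alpha$, proves the same auxiliary commutation $\nu M\cdot \delta = M\delta\cdot \nu$ by combining the Frobenius condition \eqref{Frob} with the assumed square and cancelling the (split) natural epimorphism $\mu$, and then concludes via Lemma \ref{lem:Malpha} that each $(A,\overline\alpha)$ is firm, so that condition (1) of Theorem \ref{thm:iso} holds. Your sketched verification of $\overline\alpha\cdot s = 1_A$ assembles correctly in precisely the order you indicate (coassociativity of $\alpha$, the commutation of $\nu$ past $\delta$, counitality of the comonad, the section property of $\nu$, and counitality of the coaction), matching the paper's final diagram composed with $\varepsilon A\cdot \mu A$.
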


\begin{proof}
We will show that for every $M$-comodule $(A, \alpha)$, and
$\overline{\alpha}$ as in \eqref{eq:alphabar}, the diagram 
$$
\xymatrix@C=40pt{
M^2 A \ar^-{\mu A}@<2pt>[r] \ar@<-2pt>_-{M \overline{\alpha}}[r]& 
MA \ar^-{\overline{\alpha}}[r] \ar@{-->}@/_15pt/_{\nu A}[l] & 
\ar@{-->}@/_15pt/_{M \varepsilon A \cdot \nu A \cdot \alpha }[l] A }
$$
is a contractible coequalizer in $\mathbb A$ (in the sense of \cite[page
93]{TTT}). By assumption, $\mu A \cdot \nu A = MA$. By Lemma
\ref{lem:aux}~{(1)}, and naturality of $\nu$, we get that the diagram 
$$
\xymatrix@C=40pt@R=12pt{
MA \ar@{=}[r]\ar^-{\overline{\alpha}}[dd] & 
MA \ar^-{\nu A}[r] \ar^-{M \alpha}[d] & 
M^2 A \ar@{=}[r] \ar^-{M^2 \alpha}[d] & 
M^2A \ar^-{M \overline{\alpha}}[dd] \\
& M^2 A \ar^-{\nu MA}[r] \ar^-{\mu A}[d] & 
M^3 A \ar^-{M \mu A}[d] & \\
A \ar^-{\alpha}[r] & 
MA \ar^-{\nu A}[r] & 
M^2A \ar^-{M \varepsilon A}[r] & 
MA }
$$
is commutative. Thus, $M\overline{\alpha} \cdot \nu A = M \varepsilon A \cdot
\nu A \cdot \alpha \cdot \overline{\alpha}$. Using that $\nu$ is natural, the
Frobenius condition and the assumption, also the following diagram is seen to
commute. 
$$
\xymatrix@C=40pt@R=12pt{
M \ar_-{\nu}[ddd] \ar^-{\delta}[rrr] & & & 
M^2 \ar^-{\nu M}[ddd]\\
& M^2 \ar_-{\nu M}[d] \ar^-{\mu}[lu] \ar^-{M \delta}[r] & 
M^3 \ar_-{\mu M}[ru] \ar^-{\nu M^2}[d] & \\
& M^3 \ar_-{M \mu}[dl] \ar^-{M^2 \delta}[r] & 
M^4 \ar^-{M \mu M}[dr] & \\
M^2 \ar^{M \delta}[rrr] & & & 
M^3}
$$
Since $\mu$ is a (split) natural epimorphism by assumption, also the outer
rectangle commutes, what implies commutativity of 
$$
\xymatrix@C=40pt@R=12pt{
A\ar^-{\alpha}[d] \ar^-{\alpha}[r] & 
MA \ar^-{\nu A}[r] \ar_-{M \alpha}[d] & 
M^2A \ar^-{M \varepsilon A}[r] & 
MA \ar_-{M \alpha}[d] \\
MA \ar^-{\delta A}[r] \ar@{=}[d] & 
M^2A \ar^-{\nu MA}[r] & 
M^3 A \ar^-{M \varepsilon M A}[r] & 
M^2A .\\
MA \ar^-{\nu A}[rr]& & 
M^2A \ar^-{M \delta A}[u]\ar@{=}[ur]}
$$
Composing both equal paths around this diagram by $\varepsilon A\cdot \mu A$,
using that $\nu$ is a section of $\mu$ and the counitality of $\alpha$, we
obtain $\overline \alpha\cdot M\varepsilon A\cdot \nu A\cdot \alpha =A$. Since
in this way $\overline \alpha$ is a split epimorphism, it is taken by $M$ to a
(split) epimorphism. So we conclude by Lemma \ref{lem:Malpha} that
$L_M (A,\alpha)=(A,\overline\alpha)$ is a firm $M$-module. 
\end{proof}

Every Frobenius pair of functors in the sense of \cite{Castano/alt:1999} gives
obviously a (unital) Frobenius adjunction and, therefore, a (unital) Frobenius
monad. A more interesting situation is described in the following
corollary. Separable functors were introduced and studied in 
\cite{Nastasescu/alt:1989}. 

\begin{corollary}\label{cor:separablemodcomod}
Let $U:\mathbb B \to \mathbb A$ be a separable functor possessing a right
adjoint $F$. Then $UF$ carries the structure of a firm Frobenius monad such
that the comparison functor $K_{(UF)}: \mathbb{A}_{(UF)} \to \mathbb{A}^{UF}$
is an isomorphism. 
\end{corollary}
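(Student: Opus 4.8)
The plan is to manufacture a non-unital Frobenius monad structure on $UF$ and then to read off the conclusion from Proposition~\ref{prop:one_side} and Theorem~\ref{thm:iso}. Write $U\dashv F$ for the given adjunction, with unit $\eta:\mathbb B\to FU$ and counit $\varepsilon:UF\to\mathbb A$. As for any adjunction, $UF$ is automatically a comonad, with comultiplication $\delta:=U\eta F$ and counit $\varepsilon$; this provides the comonad half of the structure for free. The multiplication is where separability of $U$ intervenes: by the standard characterization of separable functors (Rafael's theorem, cf.\ \cite{Nastasescu/alt:1989}), separability of the \emph{left} adjoint $U$ is equivalent to the unit $\eta$ admitting a natural retraction $\nu:FU\to\mathbb B$, that is $\nu\cdot\eta=\mathbb B$. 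I would use it to set $\mu:=U\nu F:(UF)^2\to UF$.

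With these choices, $(UF,\mu)$ is a non-unital monad: associativity of $\mu$ amounts, after stripping the outer $U(-)F$, to the naturality of $\nu$ taken at the component $\nu F$, hence holds. For the Frobenius compatibility \eqref{Frob} one must verify, with $M=UF$, the two identities $\delta\cdot\mu=M\mu\cdot\delta M$ and $\delta\cdot\mu=\mu M\cdot M\delta$. Again cancelling the outer $U(-)F$, these reduce respectively to the naturality square of $\eta$ at the component $\nu F$ and to the naturality square of $\nu$ at the component $\eta F$, so both hold. Thus $UF$ is a non-unital Frobenius monad.

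The key observation is now that the section required by Proposition~\ref{prop:one_side} is nothing but the comultiplication $\delta$ itself. Indeed, the retraction identity gives $\mu\cdot\delta=U\nu F\cdot U\eta F=U\big((\nu\cdot\eta)F\big)=UF$, so $\delta$ is a natural section of $\mu$; and the commutativity demanded in Proposition~\ref{prop:one_side}, namely $\delta\cdot\mu=M\mu\cdot\delta M$, is precisely the first of the two Frobenius identities verified above. Applying Proposition~\ref{prop:one_side} with its section $\nu$ taken to be $\delta$, the equivalent conditions of Theorem~\ref{thm:iso} hold; by part~(2) of that theorem $UF$ is a firm Frobenius monad and $K_{(UF)}$ is an isomorphism, which is the assertion.

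I expect the only delicate point to be bookkeeping rather than substance: one must apply Rafael's theorem to the correct adjoint (separability of the left adjoint $U$ splits the \emph{unit}, not the counit) and keep the whiskered naturality squares anchored at the right components. Conceptually there is no real obstacle, since every step is a single invocation of naturality or of a triangle/retraction identity; the one genuine idea is to recognise the comultiplication of the comonad $UF$ as the very section that Proposition~\ref{prop:one_side} calls for.
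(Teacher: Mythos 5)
Your proof is correct and takes essentially the same route as the paper: Rafael's theorem gives a natural retraction of the unit $\eta$, this makes $FU\to\mathbb B$ a non-unital Frobenius adjunction endowing $UF$ with multiplication $U\nu F$, comultiplication $U\eta F$ and counit $\varepsilon$, and Proposition~\ref{prop:one_side} is then invoked with the section taken to be $U\eta F=\delta$ --- exactly the paper's choice. The only differences are cosmetic: the paper delegates the associativity and Frobenius verifications to its general statement on non-unital Frobenius adjunctions, which you instead check by hand, and your phrase ``stripping the outer $U(-)F$'' should be read in the sound direction --- verify the unwhiskered naturality identities ($\eta\cdot\nu=FU\nu\cdot\eta FU$, etc.) and then whisker with $U(-)F$, since $U$ and $F$ need not be faithful.
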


\begin{proof}
By Rafael's theorem \cite{Rafael:1990}, there exists a retraction (i.e. left
inverse) $\varphi$ of the unit $\eta:\mathbb B \to FU$ of the adjunction. Then
$\varphi:FU\to \mathbb B$ is a non-unital Frobenius adjunction so that $UF$ is
a non-unital Frobenius monad. The claim follows by applying to it Proposition
\ref{prop:one_side}, putting $\nu:=U\eta F$. 
\end{proof}

\section{Application: Firm Frobenius algebras over commutative rings}

\subsection{Firm Frobenius algebra}
Let $k$ be an associative, unital, commutative ring and denote the category of
$k$-modules by $\mathbb M_k$. It is a monoidal category via the $k$-module
tensor product $\ox$ and the neutral object $k$. 

Any associative algebra $R$ --- possibly without a unit --- over $k$ may be
equivalently defined as a non-unital monad $(-) \ox R: \mathbb{M}_{ k} \to
\mathbb{M}_{ k}$. The category of non-unital modules for this monad ---
equivalently, the category of non-unital modules for the algebra $R$ --- will
be denoted by $\mathbb M_R$. For any (non-unital) right $R$-module
$(A,\alpha)$ and left $R$-module $(B,\beta)$, we denote by $A\ox_R B$ the
coequalizer of $\alpha\ox B$ and $A\ox \beta$ in $\mathbb M_k$ and we call it
the {\em $R$-module tensor product}. 

\begin{proposition}
For a non-unital $k$-algebra $R$ and a non-unital right $R$-module $(A,A\ox R
\stackrel {\alpha\,} \to A)$, the following assertions are equivalent. 
\begin{itemize}
\item[{(1)}] 
$(A,\alpha)$ is a firm module for the non-unital monad $(-)\ox R:\mathbb M_k
\to \mathbb M_k$. 
\item[{(2)}] 
The action $\alpha:A\ox R\to A$ projects to a bijection $A \ox_R R \to A$. 
\end{itemize}
\end{proposition}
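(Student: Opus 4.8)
The plan is to recognize the $R$-module tensor product $A\ox_R R$ as the coequalizer, computed in $\mathbb M_k$, of exactly the parallel pair appearing in the firmness fork \eqref{eq:firm_module}, and then to move the coequalizer property back and forth between $\mathbb M_k$ and $\mathbb M_R=\mathbb A_M$. Writing $m\colon R\ox R\to R$ for the multiplication, the non-unital monad $M=(-)\ox R$ has structure maps $\mu A=A\ox m$ and $M\alpha=\alpha\ox R$, both arrows $A\ox R\ox R\to A\ox R$. By definition $A\ox_R R$ is the coequalizer in $\mathbb M_k$ of the pair $(\alpha\ox R,\,A\ox m)=(M\alpha,\mu A)$, with canonical surjection $q\colon A\ox R\to A\ox_R R$, and the associativity axiom \eqref{ass} is exactly the statement that $\alpha$ coequalizes this pair. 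Hence $\alpha$ factors uniquely as $\alpha=\theta\cdot q$ for a $k$-linear map $\theta\colon A\ox_R R\to A$, and assertion (2) is the claim that this $\theta$ is bijective. Everything then reduces to comparing $\theta$ with the firmness coequalizer.

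For $(2)\Rightarrow(1)$ I would argue as follows. If $\theta$ is bijective, then $\alpha=\theta\cdot q$ is surjective, so $\alpha$ is an epimorphism in $\mathbb M_k$; and composing the canonical coequalizer $q$ with the isomorphism $\theta$ shows that the fork \eqref{eq:firm_module} is itself a coequalizer in $\mathbb M_k$. Since $(-)\ox R$ is right exact it preserves epimorphisms, so $M\alpha=\alpha\ox R$ is an epimorphism. Lemma \ref{lem:Malpha} then applies directly and yields that $(A,\alpha)$ is a firm $M$-module.

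For $(1)\Rightarrow(2)$ I would first make $C:=A\ox_R R$ into an object of $\mathbb M_R$, using the right $R$-action induced by right multiplication of $R$ on itself; associativity of $m$ guarantees this is a (non-unital) $R$-action, and it makes both $q\colon(A\ox R,\mu A)\to C$ and $\theta\colon C\to(A,\alpha)$ into morphisms of $\mathbb M_R$ (for $\theta$ this is once more associativity, $\theta([a\ox r]\cdot r')=a\cdot(rr')=(a\cdot r)\cdot r'$). In particular $q$ is an $\mathbb M_R$-morphism that coequalizes $(M\alpha,\mu A)$, so the universal property of the firmness coequalizer \eqref{eq:firm_module} in $\mathbb M_R$ produces a unique $R$-linear $\psi\colon A\to C$ with $\psi\cdot\alpha=q$. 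Then $\theta\cdot\psi\cdot\alpha=\theta\cdot q=\alpha$ forces $\theta\cdot\psi=\mathrm{id}_A$ because $\alpha$ is epi, and $\psi\cdot\theta\cdot q=\psi\cdot\alpha=q$ forces $\psi\cdot\theta=\mathrm{id}_C$ because $q$ is epi. Thus $\theta$ is an isomorphism and (2) holds.

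The substantive point, and the only step I expect to require care, is the interplay between coequalizers taken in $\mathbb M_k$ and in $\mathbb M_R$: what makes $(1)\Rightarrow(2)$ work is that $A\ox_R R$ carries a canonical right $R$-module structure for which the quotient $q$ is $R$-linear, so that the universal property of the firm coequalizer (which lives in $\mathbb M_R$) can be tested against the purely $k$-linear factorization $\theta$. In the converse direction the only non-formal ingredient is right-exactness of $(-)\ox R$, which is precisely what guarantees the hypothesis $M\alpha$ epi needed to invoke Lemma \ref{lem:Malpha}; the remaining verifications are routine diagram chases with the two universal properties.
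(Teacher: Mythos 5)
Your proof is correct, and it is a hands-on version of the paper's much shorter argument. The paper settles both directions at once by citing a general categorical fact: since $(-)\ox R$ is right exact, coequalizers exist in $\mathbb M_R$ and the forgetful functor $\mathbb M_R\to\mathbb M_k$ \emph{creates} them, so the fork \eqref{eq:firm_module} is a coequalizer in $\mathbb M_R$ if and only if it is one in $\mathbb M_k$, i.e.\ if and only if your map $\theta\colon A\ox_R R\to A$ is bijective; the only additional remark the paper makes is that (2) forces $\alpha$ to be surjective, hence epi. You never invoke creation: for $(1)\Rightarrow(2)$ you prove the relevant instance of it by hand, equipping $A\ox_R R$ with the induced $R$-action (where associativity of the multiplication and right-exactness of $(-)\ox R$ are exactly what make the action descend and $q$ become $R$-linear) and then playing the two universal properties against each other, with the epi-cancellations $\theta\psi=\mathrm{id}_A$ and $\psi\theta=\mathrm{id}_C$ replacing the preservation half of the creation property; for $(2)\Rightarrow(1)$ you route through Lemma \ref{lem:Malpha}, supplying its hypothesis that $M\alpha=\alpha\ox R$ is epi from the fact that tensoring preserves surjections --- a step the paper does not need, since creation transfers the coequalizer to $\mathbb M_R$ directly. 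What the paper's formulation buys is brevity and reusability (the same creation fact implicitly underlies the subsequent identification of firm monads $(-)\ox R$ with firm rings); what yours buys is self-containedness, isolating precisely where associativity and right-exactness enter, and showing that the full strength of ``creates coequalizers'' is not needed for this proposition. Both arguments are complete.
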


\begin{proof}
Since $(-) \ox R$ is a right exact endofunctor on $\mathbb{M}_k$, we get that
coequalizers exist in $\mathbb{M}_R$ and the forgetful functor $\mathbb M_R
\to \mathbb M_k$ creates them. Hence \eqref{eq:firm_module} is a coequalizer
in $\mathbb{M}_R$ if and only if it is a coequalizer in $\mathbb{M}_k$ so if
and only if (2) holds. This proves $(1)\Rightarrow (2)$. If (2) holds then
$\alpha$ is surjective, hence it is an epimorphism in $\mathbb{M}_k$ proving
$(2)\Rightarrow (1)$. 
\end{proof}

Similarly, $(-) \ox R:\mathbb M_k \to \mathbb M_k$ is a firm monad if and only
if the multiplication map $R \ox R \to R$ projects to an isomorphism $R \ox_{
R} R \to R$. That is, if and only if $R$ is a firm ring in the sense of
\cite{Qui}. 

By a {\em non-unital Frobenius $k$-algebra} we mean a $k$-module $R$ such that
$(-)\ox R:\mathbb M_k\to \mathbb M_k$ is a non-unital Frobenius
monad. Explicitly, this means that $R$ is a non-unital $k$-algebra with
multiplication $\mu:R\ox R \to R$ and a $k$-coalgebra with comultiplication
$\Delta:R\to R\ox R$ and counit $\epsilon:R\to k$, such that $\mu$ is a
morphism of $R$-bicomodules, equivalently, $\Delta$ is a morphism of
$R$-bimodules, that is, the following diagram commutes. 
$$
\xymatrix@R=10pt{
R\ox R\ar[rr]^-{R\ox \Delta}\ar[dd]_-{\Delta \ox R}\ar[rd]^-{\mu}&&
R\ox R \ox R\ar[dd]^-{\mu \ox R}\\
& R \ar[rd]^-\Delta\\
R\ox R \ox R\ar[rr]^-{R \ox \mu}&&
R\ox R}
$$
A {\em firm Frobenius $k$-algebra} is a non-unital Frobenius $k$-algebra which
is a firm algebra. 

\subsection{The Casimir multiplier}
In the case of a unital Frobenius algebra $R$, the $R$-bilinear
comultiplication is tightly linked to a so-called Casimir element in $R \ox R$
(the image under $\Delta$ of the unit $1$). In our present setting, this is
only possible if we allow the Casimir element to be a multiplier
\cite{Dauns:1969}. 

Let $R$ be a non-unital $k$-algebra with a non-degenerate multiplication. That
is, assume that any of the conditions $(sr=0,\ \forall s\in R)$ and
$(rs=0,\ \forall s\in R)$ implies $r=0$. A {\em multiplier} on $ R $ is a pair 
$(\lambda,\varrho)$ of $k$-module endomorphisms of $R$ such that 
\begin{equation}\label{eq:multiplier}
{ \varrho}(r)s = 
 r { \lambda}(s),\qquad \textrm{for all\ }
 r,s \in R . 
\end{equation}
By \cite[1.5]{Dauns:1969}, $\lambda$ is a right $R$-module map and $\rho$ is a
left $R$-module map. 

The $k$-module $\mathcal{M}( R )$ of all multipliers is a unital associative
algebra with the { multiplication $(\lambda,\varrho) (\lambda',\varrho') =
(\lambda \lambda', \varrho' \varrho)$, where juxtaposition in the components
means composition of maps. Throughout, we denote by $1$ the unit element
$(\mathsf{id},\mathsf{id})$ of $\M R$. There exists an injective
homomorphism of algebras from $R$ to $\M R$ sending $r \in R$ to the
multiplier $(\lambda_{r}, \rho_{r})$, where $\lambda_{r}(s) = rs$ and
$\rho_{r}(s) = sr$, for all $r,s \in R $. The image of $R$ becomes a two-sided
ideal of $\mathcal{M}(R)$: a multiplier $\omega = (\lambda, \varrho)$ acts on
an element $r \in R$ by $\omega r = \lambda(r)$, and $r\omega = \varrho(r)$
(so that \eqref{eq:multiplier} can be rewritten as $(r\omega )s=r(\omega s)$,
for all $s,r\in R$ and $\omega \in \M R$; allowing for a simplified writing
$r\omega s$). This yields inclusions $R\ox R\subseteq \M R \ox \M R \subseteq
\M {R\ox R}$. By the non-degeneracy of the multiplication of $R$, two
multipliers $\omega$ and $\omega'$ on $R$ are equal if and only if $\omega
r=\omega' r$ for all $r\in R$ and if and only if $r \omega=r \omega'$ for all
$r\in R$. 

\begin{proposition}
{ Let $R$ be a firm algebra with non-degenerate multiplication over a
commutative ring $k$. Then} $R$ is a firm Frobenius algebra if and only if
there exists a multiplier $e \in \M{R \ox R}$ and a linear map $\epsilon : R
\to k$ such that, for all $r\in R$, $(r \ox 1)e = e(1 \ox r)$ is an element of
$R\ox R$ and 
$$
(\epsilon \ox R)((r \ox 1)e)=r=
(R\ox \epsilon)(e(1 \ox r)).
$$
\end{proposition}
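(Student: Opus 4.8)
The statement characterizes firm Frobenius algebra structures on a firm, non-degenerate algebra $R$ in terms of a Casimir multiplier $e\in\M{R\ox R}$ together with a counit $\epsilon:R\to k$. Since "firm Frobenius algebra" means precisely that $(-)\ox R$ is a non-unital Frobenius monad whose constituent algebra is firm, and firmness of $R$ is already assumed on both sides of the stated equivalence, what I really need to prove is that a Frobenius monad structure on $(-)\ox R$ (i.e. a coassociative counital comultiplication $\Delta:R\to R\ox R$ with counit $\epsilon$ satisfying the bimodule-map condition \eqref{Frob}) is equivalent to the data of a multiplier $e\in\M{R\ox R}$ and a map $\epsilon:R\to k$ with the two displayed properties. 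The plan is therefore to set up a dictionary $\Delta\leftrightarrow e$ and to check that the defining axioms of one side translate into those of the other, making essential use of the non-degeneracy and firmness hypotheses via the inclusions $R\ox R\subseteq\M R\ox\M R\subseteq\M{R\ox R}$ established in the paragraph before the statement.

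\textbf{From $\Delta$ to $e$.} Given a Frobenius comultiplication $\Delta$, the bimodule-map condition \eqref{Frob} says exactly $(\mu\ox R)(R\ox\Delta)=\Delta\cdot\mu=(R\ox\mu)(\Delta\ox R)$; evaluating on $r\ox s$ this reads $r\Delta(s)=\Delta(rs)=\Delta(r)s$ in $R\ox R$. In the absence of a unit $\Delta$ itself need not be the "evaluation of $e$", but the first step is to observe that the assignment $(r\ox 1)\mapsto r\cdot(\text{left factor})$, $(1\ox r)\mapsto(\text{right factor})\cdot r$ determines a multiplier $e=(\lambda,\varrho)$ on $R\ox R$ whose compatibility equation \eqref{eq:multiplier} is forced by coassociativity of $\Delta$. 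Concretely I would define $(r\ox 1)e:=\Delta(r)$ (left $R$-action, absorbed into the first tensor factor) and $e(1\ox r):=\Delta(r)$ (right action into the second factor); the Frobenius condition guarantees these two prescriptions are compatible so that $e$ is a genuine multiplier, and both expressions land in $R\ox R$ precisely because they equal $\Delta(r)\in R\ox R$. The counit axioms $(\epsilon\ox R)\Delta=\mathrm{id}=(R\ox\epsilon)\Delta$ then become the two displayed equations.

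\textbf{From $(e,\epsilon)$ to $\Delta$, and non-degeneracy.} Conversely, given $e$ and $\epsilon$, I would \emph{define} $\Delta(r):=(r\ox1)e$ (equivalently $e(1\ox r)$, by hypothesis an element of $R\ox R$). Bilinearity and the bimodule-map property follow by computing $\Delta(rs)=(rs\ox1)e=(r\ox1)(s\ox1)e=(r\ox1)\bigl(e(1\ox s)\bigr)$ and rearranging, where the multiplier axiom \eqref{eq:multiplier} for $e$ is what lets me slide the algebra elements across $e$; counitality is immediate from the displayed equations. The genuinely delicate points are \emph{coassociativity} of $\Delta$ and well-definedness of the two directions as mutually inverse. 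Coassociativity is not automatic from the mere existence of $e$; it must be read off from the associativity of the multiplier product in $\M{R\ox R}$ together with the firmness of $R$, which is what allows one to compare $(\Delta\ox R)\Delta$ and $(R\ox\Delta)\Delta$ as multipliers on $R\ox R\ox R$ by testing them against elements. I expect this to be the main obstacle: one must promote the pointwise identities, which a priori only hold after multiplying by algebra elements, to genuine equalities of maps, and it is exactly here that I will invoke non-degeneracy (two multipliers agreeing on all $r$ are equal) and firmness (to identify $R\ox_R R\cong R$, so that the coequalizer presentation of elements of $R$ lets the multiplier identities be detected). Once coassociativity is secured, verifying that the two constructions $\Delta\mapsto e$ and $(e,\epsilon)\mapsto\Delta$ are inverse is a routine check that again rests on non-degeneracy.
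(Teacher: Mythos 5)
Your dictionary is exactly the paper's --- $e$ arises by extending $\Delta$ to multipliers, non-degeneracy pins down $(r\ox 1)e=\Delta(r)=e(1\ox r)$, and in the converse direction firmness (surjectivity of $\mu$) rescues coassociativity --- but your forward direction, as written, has a concrete gap. Since $R$ has no unit, $r\ox 1$ and $1\ox r$ are \emph{not} elements of $R\ox R$, so the prescriptions $(r\ox 1)e:=\Delta(r)$ and $e(1\ox r):=\Delta(r)$ do not define a multiplier: by definition a multiplier on $R\ox R$ is a pair $(\lambda,\varrho)$ of $k$-module endomorphisms of $R\ox R$, and you never say what $e$ does to a general element $s\ox r\in R\ox R$. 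Nor may you invoke associativity of $\M{R\ox R}$ to ``propagate'' the prescriptions to such elements, since at that stage $e$ is not yet known to be an element of that algebra. The paper supplies precisely the missing formulas, $e(s\ox r)=\Delta(r)(s\ox 1)$ and $(s\ox r)e=(1\ox r)\Delta(s)$ (that is, $\tilde\Delta(1)$ in \eqref{eq:deltatilde}), and then verifies the compatibility \eqref{eq:multiplier} by a computation using only the left and the right $R$-linearity of $\Delta$. Your claim that this compatibility ``is forced by coassociativity of $\Delta$'' is wrong --- coassociativity plays no role anywhere in the forward direction --- though your later remark that ``the Frobenius condition guarantees'' it is the correct attribution; the two statements cannot both stand, and only the second survives the computation. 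Finally, note that after constructing $(\lambda,\varrho)$ one still needs non-degeneracy to conclude $(r\ox 1)e=\Delta(r)$: the paper computes $\left((r\ox 1)e\right)(s'\ox r')=\Delta(r)(s'\ox r')$ for all $s'\ox r'$ and then cancels, which is the only place non-degeneracy genuinely intervenes.

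Your converse direction is sound and is the paper's argument in outline: define $\Delta(r):=(r\ox 1)e=e(1\ox r)$; bilinearity follows from the multiplier calculus, now legitimately, since $e\in\M{R\ox R}$ is given ($\Delta(rs)=\left((r\ox 1)(s\ox 1)\right)e=(r\ox 1)\Delta(s)$ and $\Delta(rs)=e\left((1\ox r)(1\ox s)\right)=\Delta(r)(1\ox s)$); and coassociativity is checked only on products, where applying the two bilinearity identities in the two possible orders yields $(\Delta\ox R)\Delta(rs)=(\Delta(r)\ox 1)(1\ox\Delta(s))=(R\ox\Delta)\Delta(rs)$, so the surjectivity of $\mu$ granted by firmness finishes --- this is the content of the paper's hexagonal diagram. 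One correction to your sketch: non-degeneracy is \emph{not} needed here, contrary to your plan of ``testing multipliers against elements''; the comparison is an equality of honest elements of $R\ox R\ox R$ once precomposed with the epimorphism $\mu$, and no passage through $\M{R\ox R\ox R}$ occurs. With the multiplier formulas supplied and the misattributed justification repaired, your outline completes to the paper's proof.
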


\begin{proof}
Given an $R$-bilinear comultiplication $\Delta : R \to R \ox R$, define
$\tilde{\Delta} : \M{R} \to \M{R \ox R}$ by 
\begin{eqnarray}\label{eq:deltatilde}
&&\tilde{\Delta}(\omega)(s \ox r)= \Delta(\omega r)(s \ox 1) \\
&&(s \ox r)\tilde{\Delta}(\omega)= (1 \ox r) \Delta (s\omega),
\quad \textrm{for } \omega \in \M{R},\ s, r \in R.
\nonumber
\end{eqnarray}
The following computation shows that $\tilde{\Delta}(\omega)$ is a multiplier: 
\begin{eqnarray*}
((s' \ox r') \tilde{\Delta}(\omega))(s \ox r) & = & 
(1 \ox r')\Delta(s'\omega)(s \ox r) 
= (1 \ox r')\Delta(s'\omega r)(s \ox 1) \\
& = & (s' \ox r')\Delta (\omega r)(s \ox 1) 
= (s' \ox r')(\tilde{\Delta}(\omega)(s \ox r)),
\end{eqnarray*}
where in the second equality we used that $\Delta$ is right $R$-linear, and in
the third one that it is left $R$-linear. Put $e = \tilde{\Delta}(1) \in \M{R
\ox R}$. Take $s' \ox r' \in R \ox R$ and $r \in R$. Using once more that
$\Delta$ is left and right $R$-linear, 
$$
(r \ox 1)e(s' \ox r') = 
(r \ox 1)\Delta(r')(s' \ox 1) = 
\Delta(rr')(s' \ox 1) = 
\Delta(r)(s' \ox r').
$$
Therefore, $(r \ox 1) e = \Delta (r)$ and so $(\epsilon \ox R)((r\ox 1)e) =
r$. Symmetrically, $e(1 \ox r) = \Delta (r)$ and so $(R \ox \epsilon)(e(1 \ox
r)) = r$. 

Conversely, assume the existence of $e \in \M{R \ox R}$ and $\epsilon : R \to
k$ as in the claim. Define 
$$
\Delta(r) = (r \ox 1)e = e (1 \ox r) \in R \ox R,\quad \hbox{ for all } r \in
R.
$$
This is clearly an $R$-bilinear comultiplication with the counit
$\epsilon$. It remains to prove that $\Delta$ is coassociative. Since $R$ is
firm, we know that $\mu$ is an epimorphism. Therefore, the coassociativity of
$\Delta$ follows from the commutativity of the diagram 
$$
\xymatrix@C=40pt@R=10pt{R \ar^-{\Delta}[rrr] \ar_-{\Delta}[ddd] & & & 
R \ox R \ar^-{\Delta \ox R}[ddd]\\
& R \ox R \ar_-{\Delta \ox R}[d] \ar^-{R \ox \Delta}[r] \ar^-{\mu}[lu] & 
R \ox R \ox R \ar_-{\mu \ox R}[ru] \ar^-{\Delta \ox R \ox R}[d] & \\
& R \ox R \ox R \ar_-{R \ox \mu}[dl] \ar^-{R \ox R \ox \Delta}[r] & 
R \ox R \ox R \ox R \ar^-{R \ox \mu \ox R}[rd]& \\
R \ox R \ar^-{R \ox \Delta}[rrr] & & & R \ox R \ox R.}
$$
\end{proof}

\subsection{Firm modules and comodules}
The following extension of Abrams' classical theorem on unital Frobenius
algebras \cite{LAbrams} is an immediate consequence of Theorem \ref{thm:iso}. 

\begin{theorem} \label{thm:iso_k}
Let $R$ be a non-unital Frobenius algebra over a commutative ring $k$. Then
the following assertions are equivalent. 
\begin{itemize}
\item[{(1)}] 
Any right $R$-comodule $N$ is a firm right $R$-module via the action $n\cdot
r:=n_0\epsilon(n_1r)$ (where Sweedler's implicit summation index notation
$n\mapsto n_0\ox n_1$ is used for the coaction). 
\item[{(2)}] 
$R$ is a firm Frobenius $k$-algebra and the category $\mathbb M_{(R)}$ of firm
right $R$-modules and the category $\mathbb M^R$ of right $R$-comodules are
isomorphic via the following mutually inverse functors. The functor $\mathbb
M_{(R)} \to \mathbb M^R$ sends a firm right $R$-module $M$ to 
$$
(M, \xymatrix@C=30pt{
M\ar[r]^-\cong&
M\ox_R R \ar[r]^-{M\ox_R \Delta}&
M\ox_R R \ox R \ar[r]^-\cong&
M\ox R}).
$$
The functor $\mathbb M^R \to \mathbb M_{(R)}$ sends an $R$-comodule $(N,\rho)$
to 
$$
(N,\xymatrix{
N\ox R \ar[r]^-{\rho\ox R}&
N\ox R \ox R \ar[r]^-{N\ox \mu}&
N\ox R \ar[r]^-{N\ox \epsilon}&
N}).
$$
On the morphisms both functors act as the identity maps.
\end{itemize}
\end{theorem}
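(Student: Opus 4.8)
The plan is to read Theorem~\ref{thm:iso_k} off Theorem~\ref{thm:iso}, applied to the non-unital Frobenius monad $M:=(-)\ox R$ on $\mathbb M_k$. Under this identification $\mathbb A_M=\mathbb M_R$, the Eilenberg--Moore category $\mathbb A^M$ of comodules is $\mathbb M^R$, and the full subcategory $\mathbb A_{(M)}$ of firm modules is $\mathbb M_{(R)}$. First I would make this dictionary explicit and, in particular, evaluate the natural transformation $\overline\alpha$ of \eqref{eq:alphabar} in these terms: for a right $R$-comodule $(N,\rho)$ with $\rho(n)=n_0\ox n_1$, the composite $N\ox R\xrightarrow{\rho\ox R}N\ox R\ox R\xrightarrow{N\ox\mu}N\ox R\xrightarrow{N\ox\epsilon}N$ sends $n\ox r$ to $n_0\epsilon(n_1r)$, which is exactly the $R$-action displayed in assertion~(1).

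With this computation, assertion~(1) of Theorem~\ref{thm:iso_k}---that every comodule $(N,\rho)$ is a firm module under $\overline\rho$---is verbatim assertion~(1) of Theorem~\ref{thm:iso}, because a non-unital module $(N,\overline\rho)$ is firm precisely when $\overline\rho$ is an epimorphism and the fork \eqref{eq:firm_module} is a coequalizer in $\mathbb A_M=\mathbb M_R$. I would therefore invoke Theorem~\ref{thm:iso} to obtain the equivalence of assertion~(1) with the conjunction: $M$ is a firm Frobenius monad and $K_{(M)}$ is an isomorphism. By the definition of firm Frobenius algebra the first clause says precisely that $R$ is a firm Frobenius $k$-algebra, while $K_{(M)}$ specialises to a functor $\mathbb M_{(R)}\to\mathbb M^R$ whose being an isomorphism is the categorical isomorphism asserted in~(2).

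It remains only to match the two explicit functors written in~(2) with $K_{(M)}$ and its inverse. The comodule-to-module direction is immediate: the functor $\mathbb M^R\to\mathbb M_{(R)}$ sends $(N,\rho)$ to $(N,\overline\rho)$, which is the factorisation $L_{(M)}$ of $L_M$ of \eqref{eq:V_M} through $\mathbb M_{(R)}$, exactly the inverse of $K_{(M)}$ exhibited in the proof of $(1)\Rightarrow(2)$ of Theorem~\ref{thm:iso}. The harder identification, which I expect to be the main obstacle, is that the module-to-comodule functor of~(2) equals $K_{(M)}$; that is, the coaction it produces agrees with $\eta(N,\alpha)$ from Proposition~\ref{prop:firm_Frob_adj}. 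Since $\eta(N,\alpha)$ is defined only implicitly, through the universal property of the coequalizer, by the relation $\eta(N,\alpha)\cdot\alpha=M\alpha\cdot\delta N$, I would first translate this to $\eta(N,\alpha)(n\cdot r)=(n\cdot r_1)\ox r_2$ with $\Delta(r)=r_1\ox r_2$, and then trace an element $n\cdot r$ through the composite $N\xrightarrow{\cong}N\ox_R R\xrightarrow{N\ox_R\Delta}N\ox_R R\ox R\xrightarrow{\cong}N\ox R$, where firmness supplies the bijection $N\ox_R R\to N$ used for the first isomorphism. Both routes yield $(n\cdot r_1)\ox r_2$, and since $\alpha$ is epi this common value determines the coaction, completing the identification and hence the proof.
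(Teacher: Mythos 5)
Your proposal is correct and follows exactly the paper's route: the paper offers no separate argument, stating only that Theorem~\ref{thm:iso_k} is ``an immediate consequence of Theorem~\ref{thm:iso}'' applied to the monad $(-)\ox R$, and your dictionary (identifying $\overline\rho$ with the action $n\cdot r=n_0\epsilon(n_1r)$, $L_{(M)}$ with the comodule-to-module functor, and $\eta(N,\alpha)$ with the coaction $n\cdot r\mapsto (n\cdot r_1)\ox r_2$ via the epimorphism $\alpha$) supplies precisely the verifications the paper leaves implicit. All the element-level computations check out, so this is a faithful, correctly detailed version of the intended proof.
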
 

\subsection{Example: coseparable coalgebra}\label{sec:cosep}
A (coassociative and counital) coalgebra $C$ over a commutative ring $k$ is
said to be {\em coseparable} if there is a $C$-bicomodule retraction
(i.e. left inverse) of the comultiplication. Equivalently, the forgetful
functor $U$ from the category of (say, right) $C$-comodules $\mathbb{M}^C$ to
$\mathbb{M}_k$ is separable \cite[Corollary 3.6]{Brzezinski:2002}. Since $U$
is left adjoint to $(-) \ox_k C$, Corollary \ref{cor:separablemodcomod} yields
a firm Frobenius monad $(-) \ox_k C$ on $\mathbb{M}_k$. Therefore $C$ 
admits the structure of a firm ring. The multiplication is given by the
bicolinear retraction of the comultiplication (see \cite{BKW} for a direct
proof). By Corollary \ref{cor:separablemodcomod}, the category of firm modules
$\mathbb{M}_{(C)}$ and the category of comodules $\mathbb{M}^C$ are isomorphic. 

The above reasoning can be repeated for a coring $C$ over an arbitrary
(associative and unital) algebra $A$. By \cite[Corollary
3.6]{Brzezinski:2002}, $C$ is a coseparable coring if and only if the (left
adjoint) forgetful functor from the category of (say, right) $C$-comodules to
the category of (right) $A$-modules is separable. So by Corollary
\ref{cor:separablemodcomod}, $C$ possesses a firm ring structure, described
already in \cite{BKW}. In this way Corollary \ref{cor:separablemodcomod}
extends \cite[Proposition 2.17]{BoVe}. 

\subsection{Example: graded rings}
Let $G$ be an arbitrary group. For any commutative ring $k$, consider the
$k$-coalgebra $R$ with free $k$-basis $\{ p_g : g \in G \}$, comultiplication
$\Delta (p_g) = p_g \ox p_g$ and counit $\epsilon (p_g) = 1$ for $g \in G$.
This is in fact a coseparable coalgebra via the bicomodule section of $\Delta$
--- hence associative multiplication --- determined by $p_g \ox p_h \mapsto
p_g$ if $g=h$ and $p_g \ox p_h \mapsto 0$ otherwise, for $g, h \in G$. 

For any $G$-graded unital $k$-algebra $A = \oplus_{g \in G} A_g$, the linear
map 
$$
R \ox A \to A \ox R, \qquad
p_g \ox a_h \mapsto a_h \ox p_{gh}, 
\qquad \textrm{for}\ g, h \in G\ \textrm{and}\ a_h \in A_h,
$$ 
is an entwining structure (a.k.a. mixed distributive law) and, therefore, it
defines an $A$-coring structure on $A \ox R$ (see \cite[Proposition
2.2]{Brzezinski:2002}). The $A$-bimodule structure on $A \ox R$ is determined
by 
$$
c(a \ox p_g)b_h = cab_h \ox p_{gh}, 
\qquad \textrm{for}\ a,c \in A,\quad b_h \in A_h,\quad g,h \in G,
$$ 
and its comultiplication and counit are the linear extensions of $a \ox p_g
\mapsto (a \ox p_g) \ox_A (1 \ox p_g)$ and $a\ox p_g \mapsto a$,
respectively. The category of right comodules over this coring is isomorphic
to the category of entwined (or mixed) modules \cite[Proposition
2.2]{Brzezinski:2002} which, in the present situation, is isomorphic to the
category $\mathsf{gr}$-$A$ of $G$-graded right $A$-modules. 

Thanks to the coseparability of the coalgebra $R$, the $A$-coring $A \ox R$ is
coseparable: its comultiplication has a bicomodule section 
$$
\mu : (A \ox R) \ox_A (A \ox R) \to A \ox R,\qquad
(a \ox p_g) \ox_A (b \ox p_h) \mapsto a b_{g^{-1}h} \ox p_h,
$$ 
for $a, b \in A, g, h \in G$. Therefore, the discussion at the end of Example
\ref{sec:cosep} shows that $A \ox R$ is a firm $k$-algebra via the
multiplication induced by $\mu$. (In fact, this firm algebra even has local
units.) Moreover, the category $\mathsf{gr}$-$A$ is isomorphic to the category
$\mathbb{M}_{(A \ox R)}$ of firm modules. Since the firm algebra $A\ox R$ is
isomorphic to the smash product $A \sharp G^*$ in \cite{Beattie:1988}, this
isomorphism $\mathbb{M}_{(A \ox R)}\cong \mathbb{M}_{(A \sharp G^*)}\cong
\mathsf{gr}$-$A$ reproduces the main result of \cite{Beattie:1988}. 

\subsection{Example: Frobenius algebra with local units}
We say that an algebra $R$ is an algebra with {\em local units} if there is a
set $\mathsf{E}$ of idempotent elements in $R$ such that for every finite set
$r_1, \dots, r_n \in R$ there is $e \in \mathsf{E}$ obeying $er_i = r_i=r_ie$
for every $i = 1, \dots, n$. (This definition of local units is the one used
in \cite{ElKaoutit/Gomez:2012}, and it can be traced back to
\cite{Abrams:1983} and \cite{Anh/Marki:1983}. It is more general than
\cite[Definition 1.1]{Abrams:1983}, since we do not assume that the elements
of $\mathsf{E}$ commute. In fact, the present notion generalizes that of
\cite{Abrams:1983} since, when the idempotents of $\mathsf{E}$ commute, it is
enough to require that for each element $r \in R$ there exists $e \in
\mathsf{E}$ such that $er = r= re$, see \cite[Lemma 1.2]{Abrams:1983}.) Every
algebra $R$ with local units is firm, and a right $R$-module $M$ is firm if
and only if for every $m \in M$ there is $e \in \mathsf{E}$ such that $m\cdot
e= m$. 

\begin{corollary}\label{slu}
Let $R$ be an algebra with local units over a commutative ring. If $R$ is a
firm Frobenius algebra, then the category of firm right $R$-modules and the
category of right $R$-comodules are isomorphic. 
\end{corollary}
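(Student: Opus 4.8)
The plan is to verify condition (1) of Theorem~\ref{thm:iso} for the non-unital Frobenius monad $(-)\ox R$, whence the isomorphism of $\mathbb M_{(R)}$ and $\mathbb M^R$ follows from the equivalence $(1)\Leftrightarrow(2)$ there (with $K_{(R)}$ then an isomorphism, and the explicit functors recorded in Theorem~\ref{thm:iso_k}). Concretely, for an arbitrary right $R$-comodule $(N,\rho)$ I must show that $\overline\alpha$ of \eqref{eq:alphabar} --- here the composite $N\ox R\to N\ox R$ sending $n\ox r\mapsto n_0\epsilon(n_1 r)$ after the multiplication and counit --- is an epimorphism in $\mathbb M_k$, and that the fork in \eqref{eq:firm_module} is a coequalizer in $\mathbb M_R$.

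First I would exploit the local units to pin down $\overline\alpha$ and show it is split epi: given $n\ox r\in N\ox R$, choose $e\in\mathsf E$ with $er=r=re$, and use $e$ to build an explicit right inverse, so that $\overline\alpha$ is a (split, hence regular) epimorphism in $\mathbb M_k$. Since $(-)\ox R$ preserves epimorphisms in $\mathbb M_k$ (tensoring is right exact over a commutative ring), $M\overline\alpha$ is an epimorphism as well. That is exactly the hypothesis of Lemma~\ref{lem:Malpha}, so it then suffices to check that \eqref{eq:firm_module} is a coequalizer in the base category $\mathbb M_k$, not in $\mathbb M_R$.

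Second I would verify the $\mathbb M_k$-coequalizer directly. The target $N$ is the coequalizer of $\mu\ox$ and $N\ox\overline\alpha$ precisely when $N\cong (N\ox R)\ox_R R$ via $\overline\alpha$, i.e.\ when $(N,\overline\alpha)$ is a firm $R$-module in the sense of the displayed Proposition relating firmness to bijectivity of $N\ox_R R\to N$. Using local units I can realize every $n\in N$ as $n\cdot e$ for a suitable $e\in\mathsf E$ (this is exactly the characterization of firm modules with local units stated just before the Corollary), so the canonical map is surjective with an explicit section, and a standard computation with the idempotents shows it is injective. Thus $(N,\overline\alpha)$ is firm, giving the coequalizer in $\mathbb M_k$ and, via Lemma~\ref{lem:Malpha}, in $\mathbb M_R$.

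The main obstacle will be the bookkeeping with the local units when showing injectivity of $N\ox_R R\to N$: I must argue that the relation $n\cdot r\ox s \sim n\ox rs$ in $N\ox_R R$ collapses enough, using an idempotent $e$ handling finitely many tensorands at once, to recover an element from its image. I expect the Frobenius compatibility (that $\Delta$ is $R$-bilinear) together with the counit axiom $(N\ox\epsilon)\circ\Delta$-type identities to be what makes $\overline\alpha$ behave like a genuine firm action; keeping the Sweedler indices and the idempotents straight is where the care is needed, but no deep new idea beyond the local-unit calculus and the already-established Theorem~\ref{thm:iso} should be required.
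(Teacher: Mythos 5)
Your overall strategy is exactly the paper's: reduce, via Theorem~\ref{thm:iso} (in its $k$-algebra incarnation, Theorem~\ref{thm:iso_k}), to showing that every right $R$-comodule $(N,\rho)$ becomes a \emph{firm} right $R$-module under the induced action $n\cdot r = n_0\epsilon(n_1r)$, and settle that with the local-unit calculus. But two steps need repair. First, $\overline\rho$ is not a split epimorphism in $\mathbb M_k$ in general: the would-be section $n\mapsto n\ox e_n$ requires a choice of $e_n\in\mathsf E$ varying with $n$, and such a choice is not $k$-linear. What you get is surjectivity --- which is all you need, since epimorphisms in $\mathbb M_k$ are the surjections and the right exact functor $(-)\ox R$ preserves them.

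Second, and more seriously, the one genuinely nontrivial computation is missing, and the mechanism you indicate for it is misdirected. To show $\overline\rho$ hits a given $n\in N$, the local unit must be adapted to the Sweedler legs of the \emph{coaction}: write $\rho(n)=\sum_i n_i\ox r_i$ (a finite sum), choose $e\in\mathsf E$ with $r_ie=r_i$ for all $i$, and compute $n\cdot e=\sum_i n_i\epsilon(r_ie)=\sum_i n_i\epsilon(r_i)=n$ by counitality alone (Frobenius compatibility is not needed at this point; it already went into making $\overline\rho$ an associative action, via Proposition~\ref{prop:nu_Frob_adj} and the functor $L_M$ in \eqref{eq:V_M}). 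Choosing $e$ with $er=r=re$ for the $r$ in a tensor $n\ox r$, as your first paragraph proposes, proves nothing about surjectivity onto $N$; and your justification of ``every $n$ equals $n\cdot e$'' by appeal to the characterization of firm modules over rings with local units is circular --- that every element of $N$ is fixed by some local unit is precisely what has to be extracted from the comodule structure, and it is the entire content of the paper's three-line proof. Once $n\cdot e=n$ is in hand you can simply quote that characterization (as the paper does) and drop both Lemma~\ref{lem:Malpha} and your hand-made injectivity of $N\ox_R R\to N$; those parts of your plan are sound (injectivity is the standard one-liner $\sum_i n_i\ox_R r_i=\sum_i n_i\ox_R r_ie=\bigl(\sum_i n_i\cdot r_i\bigr)\ox_R e$) but redundant given the criterion stated just before the Corollary.
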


\begin{proof}
According to Theorem \ref{thm:iso_k}, we need to prove that any right
$R$-comodule $M$ is a firm right $R$-module via the action $m\cdot
r:=m_0\epsilon(m_1r)$: Given $m \in M$, we know that $m_0\ox m_1 = \sum_i m_i
\ox r_i$ for finitely many $m_i \in M$, $r_i \in R$. Let $e \in \mathsf{E}$
such that $r_i e = r_i$ for every $i$. Then $m \cdot e= \sum_i m_i\epsilon
(r_i e) = \sum_i m_i \epsilon (r_i) = m$. 
\end{proof}

\subsection{Example: co-Frobenius coalgebra over a field}
Let $C$ be a coalgebra over a field $k$. The dual vector space $C^*$ is an
associative and unital $k$-algebra via the convolution product $(\phi \ast
\psi)(c) = \phi (c_1) \psi (c_2)$, for all $c \in C$ and $\phi, \psi \in C^*$,
where for the comultiplication the Sweedler-Heynemann index notation is used,
implicit summation understood. Every right/left $C$-comodule becomes then a
left/right $C^*$-module, in particular, $C$ becomes a $C^*$-bimodule. The
coalgebra $C$ is said to be {\em left/right co-Frobenius} if there exists a
monomorphism of left/right $C^*$-modules $C\to C^*$, see \cite{Lin:1977}. 
 
\begin{proposition}\label{prop:coFrob}
For a coalgebra $C$ over a field $k$, the following assertions are equivalent.
\begin{itemize}
\item[{(1)}] 
$C$ is a left and right co-Frobenius coalgebra.
\item[{(2)}] 
The given coalgebra structure of $C$ extends to a non-unital Frobenius algebra
with a non-degenerate multiplication. 
\item[{(3)}] 
The given coalgebra structure of $C$ extends to a firm Frobenius algebra whose
multiplication admits local units. 
\end{itemize}
\end{proposition}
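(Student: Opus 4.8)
The plan is to prove the three assertions equivalent by the cycle $(3)\Rightarrow(2)\Rightarrow(1)\Rightarrow(3)$, where the first two implications are short and explicit and the last one carries the real weight. For $(3)\Rightarrow(2)$ there is almost nothing to do: a firm Frobenius algebra is by definition a non-unital Frobenius algebra, so I only need non-degeneracy of the multiplication, and this follows at once from local units. Indeed, if $rs=0$ for all $s$, choosing $e\in\mathsf E$ with $re=r$ gives $r=re=0$, and the argument is symmetric on the other side.

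For $(2)\Rightarrow(1)$ I would exhibit the two required module monomorphisms directly from the Frobenius data. Writing the product of $C$ as juxtaposition and $\Delta(c)=c_1\ox c_2$, the Frobenius compatibility reads $\Delta(cd)=cd_1\ox d_2=c_1\ox c_2 d$. I define $\Lambda,\Lambda':C\to C^*$ by $\Lambda(c)(d)=\epsilon(cd)$ and $\Lambda'(c)(d)=\epsilon(dc)$, and first check that $\Lambda$ is right $C^*$-linear and $\Lambda'$ is left $C^*$-linear. In both cases the two sides of the linearity equation collapse to $\phi(cd)$ (respectively $\phi(dc)$) once one substitutes the two expressions for $\Delta(cd)$ and applies counitality, so the only input is the Frobenius condition. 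For injectivity I would use the counit identity $cd=\epsilon(cd_1)d_2$, obtained by applying $\epsilon\ox C$ to $\Delta(cd)=cd_1\ox d_2$: if $\Lambda(c)=0$ then $\epsilon(c\,\cdot\,)$ vanishes identically, forcing $cd=0$ for every $d$ and hence $c=0$ by non-degeneracy; the argument for $\Lambda'$ is symmetric, using $\Delta(dc)=d_1\ox d_2 c$. This produces a right and a left co-Frobenius structure at the same time.

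The hard part is $(1)\Rightarrow(3)$, and this is where the hypothesis that $k$ is a field is essential. From the left and right $C^*$-module monomorphisms I would first pass to integrals, using the rational structure of $C^*$ over a field to extract nonzero left and right integrals and, from the two monomorphisms, a non-degenerate $C^*$-balanced pairing $C\times C\to k$. I would then use this pairing, equivalently a Casimir multiplier as in the subsection on the Casimir multiplier, to define an associative product on $C$ and to verify that the given $\Delta$ is a bimodule map for it; here the module-linearity of $\Lambda$ and $\Lambda'$ is exactly what forces the bilinearity of $\Delta$, so that $C$ becomes a non-unital Frobenius algebra with $\epsilon$ as Frobenius functional.

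The genuinely field-specific step, and the step I expect to be the main obstacle, is the construction of local units. For this I would invoke the structure theory of co-Frobenius coalgebras over a field: such coalgebras are semiperfect and are directed unions of their finite-dimensional subcoalgebras, and the restriction of the integral endows suitable finite-dimensional pieces with a unital finite-dimensional Frobenius algebra structure. The idempotents arising from these pieces then assemble into a set $\mathsf E$ of local units for $C$. Once local units are in hand, firmness of $C$ is automatic, since every algebra with local units is firm, which completes $(1)\Rightarrow(3)$ and closes the cycle.
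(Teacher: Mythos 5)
Your implications $(3)\Rightarrow(2)$ and $(2)\Rightarrow(1)$ are correct and coincide with the paper's own argument: the paper dismisses $(3)\Rightarrow(2)$ as trivial (your local-units argument for non-degeneracy is the implicit verification), and your maps $c\mapsto \epsilon(c-)$ and $c\mapsto\epsilon(-c)$, with the same linearity and injectivity computations via $cd=\epsilon(cd_1)d_2$, are exactly the paper's proof of $(2)\Rightarrow(1)$. The problem is $(1)\Rightarrow(3)$, where your sketch has two genuine gaps.

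First, you never explain how the two one-sided monomorphisms are merged into a single associative multiplication for which $\Delta$ is a bimodule map. A left $C^*$-linear mono $C\to C^*$ and a right $C^*$-linear mono $C\to C^*$ are a priori unrelated maps, and no ``non-degenerate $C^*$-balanced pairing'' falls out of the pair of them: non-degeneracy on one side of $\beta(c,d)=\Lambda(c)(d)$ comes from injectivity of $\Lambda$, but non-degeneracy on the other side, and the two-sided balancedness needed for $\Delta$ to be bilinear, require relating the two monomorphisms. This is precisely the nontrivial content that the paper outsources to \cite[Theorem 2.1]{Gomez/alt:2003} --- when $C$ is left and right co-Frobenius it is semiperfect, the left and right rational ideals of $C^*$ coincide, and $C\cong \mathsf{Rat}(C^*)$ as right $C^*$-modules --- and to \cite[Theorem 2.2]{CaDaNa}, which gives $C^*$-bilinearity of the transferred convolution product (after which one still has to pass to the opposite multiplication to obtain a $C$-bicomodule map). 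Your proposed shortcut via ``a Casimir multiplier as in the subsection on the Casimir multiplier'' is also backwards: that proposition of the paper characterizes the comultiplication of an already given firm algebra with non-degenerate multiplication; it does not construct a multiplication from a pairing. Mentioning ``integrals'' is likewise off: integrals are a Hopf-algebraic notion and play no role here.

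Second, your construction of local units fails as stated. It is true over a field that $C$ is a directed union of finite-dimensional subcoalgebras, but a finite-dimensional subcoalgebra need not be closed under the transferred product, and even when it is, the restriction of the bilinear form $\epsilon(cd)$ to it can be degenerate, so there is no reason these pieces are unital Frobenius algebras whose units serve as local units (your picture works for the group-like coalgebra but not in general). The correct mechanism, and the one the paper cites, is \cite[Corollary 3.2.17]{Dascalescu/alt:2000}: for a semiperfect coalgebra, $\mathsf{Rat}(C^*)$ is a ring with local units, the idempotents arising from the decomposition of $C$ into finite-dimensional injective comodule direct summands --- coideals, not subcoalgebras. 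Once $C\cong\mathsf{Rat}(C^*)$ is established, the local units are inherited through this isomorphism; they are not produced by Frobenius structures on finite-dimensional subcoalgebras. So the overall cycle is the right shape, but the entire weight of $(1)\Rightarrow(3)$ rests on results you neither prove nor correctly identify.
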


\begin{proof}
$(1)\Rightarrow (3)$ is proved, in fact, in \cite{CaDaNa}. The main line of
the reasoning can be summarized as follows. When $C$ is semiperfect (that
is, the categories of left and right $C$-comodules have enough projectives),
then the left and right rational ideals of the convolution algebra $C^*$
coincide \cite[Corollary 3.2.16]{Dascalescu/alt:2000}. Let
$\mathsf{Rat}(C^*)$ denote their common value. By \cite[Corollary
3.2.17]{Dascalescu/alt:2000}, $\mathsf{Rat}(C^*)$ is a ring with local
units. Now, if $C$ is left and right co-Frobenius, then $C$ is semiperfect
\cite{Lin:1977} and, by \cite[Theorem 2.1]{Gomez/alt:2003}, there is an
isomorphism of say, right $C^*$-modules $C \cong \mathsf{Rat}(C^*)$. Then we
may pull-back the multiplication of $\mathsf{Rat}(C^*)$ to $C$ so that $C$
becomes a $k$-algebra with local units. By \cite[Theorem 2.2]{CaDaNa}, the
resulting multiplication is a morphism of $C^*$-bimodules. Using the relation
between the $C^*$-actions and the $C$-coactions on $C$, we conclude that the
opposite of this multiplication is a $C$-bicomodule map. Hence $C$ is also a
firm Frobenius algebra. 
 
$(3)\Rightarrow (2)$ is trivial.

$(2)\Rightarrow (1)$. A right $C^*$-module map $C\to C^*$ is provided by
$c\mapsto \epsilon(c-)$. Indeed, for $c,d\in C$ and $\varphi\in C^*$, 
$$ 
\epsilon((c\leftharpoonup \varphi)d)=
\varphi(c_1)\epsilon(c_2d)=
\varphi(cd)=
\epsilon(cd_1)\varphi(d_2)=
(\epsilon(c-)\varphi)(d).
$$
It is injective since if $\epsilon(cd)=0$ for all $d\in C$, then
$0=\epsilon(cd_1)d_2=cd$, hence $c=0$ by the non-degeneracy of the
multiplication. Symmetrically, a monomorphism of left $C^*$-modules is
provided by $c\mapsto \epsilon(-c)$. 
\end{proof}

Note that the right $C^*$-module map $C\to C^*$, $c\mapsto \epsilon(c-)$ in
the proof of Proposition \ref{prop:coFrob} is anti-multiplicative by 
$$
\epsilon(cd_1)\epsilon(c'd_2)=
\epsilon(c'\epsilon(cd_1)d_2)=
\epsilon(c'cd),
\qquad \textrm{for all } c,c',d\in C.
$$
So we conclude by Corollary \ref{slu} that if for a coalgebra $C$ over a field
the equivalent assertions in Proposition \ref{prop:coFrob} hold, then the
categories $\mathbb{M}^C$ and
${}_{(\mathsf{Rat}(C^*))}\mathbb{M}\cong\mathbb{M}_{(C)}$ are
isomorphic. Therefore, Corollary \ref{slu} extends \cite[Theorem 2.3]{CaDaNa}
and \cite[Proposition 2.7]{Beattie/alt:1998}. 

Non-degenerate algebras over a field equipped with a so-called separability
idempotent, were discussed recently in \cite{VDae}. Using the terminology of
the current paper, they are in fact coalgebras obeying the equivalent
properties in Proposition \ref{prop:coFrob} and the additional requirement
that their comultiplication splits the multiplication (`separability' of the
Frobenius structure); cf. Section \ref{sec:cosep}. In particular, they have
local units, answering in affirmative an open question in \cite{VDeWa}. 
\medskip

\noindent\textbf{Acknowledgement.}\textit{Research partially supported by the
Hungarian Scientific Research Fund OTKA, grant no. K 108384 and the 
Spanish Ministerio de Ciencia en Innovaci\'on and the European Union, grant
MTM2010-20940-C02-01. 
\\ 
JG-T thanks the members of the Wigner RCP for their kind invitation and for
the warmest hospitality during his visit in February of 2013. GB thanks
Alfons Van Daele for highly enlightening and pleasant discussions on the
subject.}

%
\end{document}